\numberwithin{equation}{section}
\newtheorem{theorem}{Theorem}[section]
\newtheorem{lemma}{Lemma}[section]
\newtheorem{remark}{Remark}[section]
\newtheorem{proposition}{Proposition}[section]
\newcommand{\dv}{\text{div}}
\title{Global large strong solution of the 3D inhomogeneous Navier-Stokes equations with density-dependent viscosity}
\author{Xiangdi H{\small UANG}$^{c}$, Jiaxu L{\small I}$^{b}$, Rong Z{\small HANG}$^{a,d}$ \thanks{Email addresses: xdhuang@amss.ac.cn (X. D. Huang), Jiaxvlee@gmail.com (J. X. Li), rzhang0921@gmail.com (R. Zhang). }  \\ 
{\normalsize a. School of Mathematics and Computer Sciences,}\\
{\normalsize Nanchang University, Nanchang 330031, P. R. China;}\\
{\normalsize b.  The Institute of Mathematical Sciences,}\\
{\normalsize  The Chinese University of Hong Kong,
Shatin, N.T.;}\\
{\normalsize c. Institute of Mathematics, Academy of Mathematics and Systems Sciences,}\\
{\normalsize Chinese Academy of Sciences, Beijing 100190, China;}\\
{\normalsize d. Institute of Mathematics and Interdisciplinary Sciences,}\\
{\normalsize Nanchang University, Nanchang 330031, P. R. China.}
}
\date{}
\begin{document}
\maketitle
\begin{abstract}
This paper concerns the Dirichlet problem of three-dimensional inhomogeneous Navier-Stokes equations with density-dependent viscosity. When the viscosity coefficient $\mu(\rho)$ is a power function of the density ($\mu(\rho)=\mu\rho^\alpha$ with $\alpha>1$), it is proved that the system will admit a unique global strong solution as long as the initial data are sufficiently large.  This is the first result concerning the existence of large strong solution for the inhomogeneous Navier-Stokes equations in three dimensions.

\textbf{Keywords:} inhomogeneous Navier-Stokes equations, density-dependent viscosity, large global strong solutions
\end{abstract}

\section{Introduction}
The Navier–Stokes equations are usually used to describe the motion of fluids. In particular, for the study of multiphase fluids without surface tension, the following density-dependent Navier–Stokes equations act as a model on some bounded domain $\Omega\subset\mathbb{R}^3$,
\begin{equation}\label{ins}
	\left\{ \begin{array}{l}
		\rho_t+\mathrm{div}(\rho u)=0,~~\mathrm{in}~\Omega\times [0,T],\\
		(\rho u)_t+\mathrm{div}(\rho u\otimes u)+\nabla P-\mathrm{div}(2\mu(\rho)d)=0,~~\mathrm{in}~\Omega\times [0,T], \\
        \dv u=0,~~\mathrm{in}~\Omega\times [0,T]\\
        \rho(0,x)=\rho_0(x),~~u(0,x)=u_0(x),~~\mathrm{in}~\Omega,
	\end{array} \right.
\end{equation}
where $t\geq0, x=(x_1,x_2,x_3)\in\Omega$ are time and space variables, respectively. $\rho=\rho(x,t)$, $u=(u_1(x,t),u_2(x,t),u_3(x,t))$ and $P$ represent, respectively, the density, the velocity and the pressure of the fluid.
\begin{equation}
	d=\frac12 \left(\nabla u+(\nabla u)^\top\right),
\end{equation}
is the deformation tensor. $\mu(\rho)$ stands for the viscosity and is a function of $\rho$, which is assumed to satisfy
\begin{equation}\label{vis-d}
	\mu(\rho)=\mu\rho^\alpha,\ \mu>0,\ \alpha\geq 0. 
\end{equation}
In this paper, we study the initial boundary value problem to the system \eqref{ins}-\eqref{vis-d} with Dirichlet boundary condition:
\begin{equation}\label{bc}
    u=0,~~\mathrm{on}~\partial\Omega\times [0,T].
\end{equation}

There is a lot of literature on the mathematical study of nonhomogeneous incompressible
flow. In particular, the system \eqref{ins} with constant viscosity has been
investigated extensively. On the one hand, in the absence of a vacuum, Kazhikov proved the global
existence of weak solutions and the local existence of strong ones in \cite{antontsev1973mathematical}. Later,
Ladyzhenskaya–Solonnikov \cite{ladyzhenskaya1978unique} first proved the global
well-posedness of strong solutions to the initial boundary value problems in both
2D bounded domains (for large data) and 3D ones (with initial
velocity small in suitable norms).
Recently, there have been many subsequent works on the global well-posedness results with  small initial data in critical spaces (see
\cite{abidi2011decay,10.57262/ade/1355867948,danchin2012lagrangian,huang2013global} and the references therein).

In general, as long as viscosity $\mu(\rho)$ depends on density $\rho$, things become more complicated due to the strong coupling between the viscosity and the density. Desjardins \cite{desjardins1997regularity} proved the global weak solution for the two-dimensional case provided that the viscosity function $\mu(\rho)$ is a small perturbation of a positive constant in $L^\infty$-norm. Recently, much progress has been made on the well-posedness of strong solutions to \eqref{ins} under some smallness conditions on the initial data (see \cite{abidi2015global,abidi2015global1,cho2004unique,lu2019local,zhang2015global,he2021global} and the references therein).

In this paper, we would like to establish the global existence of strong solutions to the system \eqref{ins} as long as the density is large enough which is a big contrast to the classical result where the initial data is a small perturbation of equilibrium states. Let's first explain our motivation and observation. 

Recall the definition of Reynolds number defined in \cite{sommerfeld1909beitrag}:
\begin{equation}
    Re=\frac{uL}{\nu}=\frac{\rho u L}{\mu},
\end{equation}
where 
\begin{itemize}
    \item $\rho$ is the density of the fluid,
    \item $u$ is the flow speed, 
    \item $L$ is a characteristic length,
    \item $\mu$ is the dynamic viscosity of the fluid,
    \item $\nu$ is the kinematic viscosity of the fluid.
\end{itemize}  
When $\mu(\rho)$ take the form of \eqref{vis-d}, suppose $L$ is a constant in a bounded domain, formally the Reynolds number is reduced to
\begin{equation}
    Re=\frac{L}{\mu}\frac{u}{\rho^{\alpha-1}},
\end{equation}
no matter how fast the flow speed is, $Re$ will be small provided $\alpha>1$ and the density $\rho$ is large enough.
At low Reynolds numbers, the flows behave like a laminar flow which is believed to be stable as many experiments revealed. 

So this gives us a hint that the flow may be global stable as long as the density is large at any time. Fortunately, this condition can be easily guaranteed by the density equation by assuming the initial density is large enough. The purpose of this paper is to verify this intuition. 

From mathematical point of view, \eqref{ins}$_2$ can be rewritten as 
\begin{equation}\label{sto}
	-\mu\Delta u+\nabla\biggl(\frac{P}{\rho^\alpha}\biggr)=\rho^{1-\alpha}\dot{u}+2\mu\alpha \rho^{-1}\nabla \rho \cdot d
    +\alpha \rho^{-1}\nabla \rho \frac{P}{\rho^\alpha},
\end{equation}
where 
\begin{equation}
    \dot u\triangleq u_t+u\cdot\nabla u
\end{equation}
denote the material derivatives. Each term of the right-hand side of Stokes system \eqref{sto} is a small perturbation term for large density and $\alpha>1$.

Indeed, such a result was verified in the compressible framework. For the compressible Navier-Stokes equations with variable viscosity coefficients with $\mu(\rho)=\mu\rho^{\alpha},\lambda(\rho)=\lambda\rho^{\alpha}, P(\rho)=a\rho^{\gamma}$, Yu \cite{yu2023global} proved the global existence of strong solutions when the initial density is large enough as long as 
\begin{equation}\label{yu1}
    \frac43<\gamma\le \alpha\le \frac53, \alpha+4\gamma>7, \alpha+\gamma\le 3,
\end{equation}
or 
\begin{equation}\label{yu2}
    \frac32<\gamma\le \alpha\le 2, \alpha+\frac23 \gamma>3.
\end{equation}

Before stating the main results, we explain the notation and conventions used throughout this paper. 
Denote
\begin{equation*}
	\int f \mathrm{dx}=\int_{\Omega} f\mathrm{dx},
\end{equation*}
For a positive integer $k$ and $p\geq1$, we denote the standard Lesbegue and Sobolev spaces as follows:
\begin{equation*}
\begin{gathered}
\|f\|_{L^p}=\|f\|_{L^p(\Omega)},\ \|f\|_{W^{k,p}}=\|f\|_{W^{k,p}(\Omega)},\ \|f\|_{H^k}=\|f\|_{W^{k,2}(\Omega)},\\
C^\infty_{0,\sigma}=\{f\in C^\infty_0(\Omega):\mathrm{div}f=0\}, \ H_0^1=\overline{C^\infty_0}, \\
H_{0,\sigma}^1=\overline{C^\infty_{0,\sigma}},\ \mathrm{closure}\ \mathrm{in}\ \mathrm{the}\ \mathrm{norm}\ \mathrm{of}\  H^1.    
\end{gathered}
\end{equation*}

 Here is our main result.
\begin{theorem}\label{global} 
    Let $\Omega$ be a bounded smooth domain in $\mathbb{R}^3$. Assume that
    \begin{equation}\label{vis-c}
        \alpha>1.
    \end{equation}
    Given constants
    \begin{equation}
        \bar\rho> 1,\quad C_0\geq 1.
    \end{equation}
    Suppose that the initial data $(\rho_0,u_0)$ satisfies 
    \begin{equation}\label{ia}
		\bar \rho \le \rho_0\le C_0 \bar \rho,\quad \rho_0 \in %L^{\gamma} \cap D^{1,2} \cap 
		W^{1,q},\ 3<q<6,\quad  u_0 \in H_{0,\sigma}^1 \cap H^2.
    \end{equation}
    Then there exists a positive constant $\Lambda_0$ depending only
	on $C_0,\mu,\alpha,$ $\|\nabla\rho_0\|_{L^q}, \| u_0\|_{H^2}$ and $\Omega$ such that if
	\begin{equation}\label{ini}
		\bar\rho \ge \Lambda_0(\Omega,C_0,\mu,\alpha,\|\nabla\rho_0\|_{L^q}, \| u_0\|_{H^2}),
	\end{equation}
    then the problem  (\ref{ins})-(\ref{bc}) admits a unique global strong solution $(\rho,u)$ in $\Omega\times(0,\infty)$ satisfying 
    \begin{equation}
    \left\{ \begin{array}{l}
    \rho\in C([0,\infty);W^{1,q}),\
    \nabla u, P\in C([0,\infty);H^1)\cap L^2((0,\infty);W^{1,q}),\\
    \rho_t \in C([0,\infty);L^q),\
    \sqrt{\rho}u_t\in L^\infty((0,\infty);L^2),\ 
    u_t\in L^2((0,\infty);H^1_0).
    \end{array} \right.
\end{equation}
\end{theorem}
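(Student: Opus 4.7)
The strategy is to combine a standard local well-posedness theorem with \emph{a priori} estimates that are global in time, exploiting in an essential way the condition $\alpha>1$ and the largeness of the density. The Stokes-type reformulation \eqref{sto} is the key structural observation: every term on its right-hand side carries a negative power of $\bar\rho$, so for $\bar\rho$ large the momentum equation behaves like a perturbation of a linear Stokes system and the nonlinear growth can be absorbed. A routine continuation argument then upgrades the local solution to a global one.

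The first step is to invoke local existence in the class $\rho\in C([0,T^\ast);W^{1,q})$, $\nabla u,\,P\in C([0,T^\ast);H^1)$ on a maximal interval $[0,T^\ast)$; the task is then to rule out blow-up at $T^\ast$. Since $\dv u=0$, the density equation is a pure transport, so $\bar\rho\le\rho(x,t)\le C_0\bar\rho$ for all $(x,t)$, and the standard transport estimate gives
\begin{equation*}
\|\nabla\rho(t)\|_{L^q}\le \|\nabla\rho_0\|_{L^q}\exp\Big(C\int_0^t\|\nabla u\|_{L^\infty}\,ds\Big).
\end{equation*}
Multiplying $\eqref{ins}_2$ by $u$ and using Korn's identity produces the basic energy identity which, together with $\rho^\alpha\ge\bar\rho^\alpha$, yields
\begin{equation*}
\sup_{t\le T}\int\rho|u|^2\,dx + \mu\bar\rho^\alpha\int_0^T\|\nabla u\|_{L^2}^2\,dt\le C\,C_0\bar\rho\|u_0\|_{L^2}^2,
\end{equation*}
whence $\int_0^T\|\nabla u\|_{L^2}^2\,dt\le C\bar\rho^{1-\alpha}$, a small quantity when $\bar\rho\gg1$.

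Next I would test $\eqref{ins}_2$ by $\dot u$ to control $\sup_t\|\nabla u\|_{L^2}^2$ together with $\int_0^T\|\sqrt\rho\,\dot u\|_{L^2}^2\,dt$; the troublesome contributions from $\nabla\mu(\rho)=\mu\alpha\rho^{\alpha-1}\nabla\rho$ and from the pressure are absorbed via the Stokes estimate applied to \eqref{sto}, which schematically reads
\begin{equation*}
\|\nabla^2 u\|_{L^2}+\|\nabla(P/\rho^\alpha)\|_{L^2}\le C\bar\rho^{1-\alpha}\|\sqrt\rho\,\dot u\|_{L^2}+C\bar\rho^{-1}\|\nabla\rho\|_{L^q}\|\nabla u\|_{L^{2q/(q-2)}}+\cdots,
\end{equation*}
each summand being small in a negative power of $\bar\rho$. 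Time-differentiating $\eqref{ins}_2$ and testing with $\dot u$ then provides $\int_0^T\|\nabla\dot u\|_{L^2}^2\,dt$, whence the $W^{2,q}$-bound on $u$ and the $L^1_tL^\infty_x$-bound on $\nabla u$ needed to close the Grönwall inequality for $\|\nabla\rho\|_{L^q}$.

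The main obstacle will be organizing these estimates into a bootstrap that is uniform in $T$. I would fix a composite functional such as $\Phi(T)=\sup_{t\le T}\|\nabla u(t)\|_{H^1}^2+\int_0^T\|\nabla\dot u\|_{L^2}^2\,dt+\sup_{t\le T}\|\nabla\rho(t)\|_{L^q}$ and run a continuity argument: on the maximal subinterval where $\Phi(T)\le M$ for a constant $M$ depending only on the initial data, the estimates sketched above should actually yield $\Phi(T)\le M/2$. The gain is driven entirely by the prefactors $\bar\rho^{-(\alpha-1)}$ and $\bar\rho^{-1}$, which can be made as small as desired by taking $\Lambda_0$ large enough to swallow the universal constants and any cumulative Grönwall exponents. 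Combined with the local existence theorem this precludes $T^\ast<\infty$; the claimed regularity follows directly from the uniform bounds, and uniqueness is a standard consequence of the strong regularity class.
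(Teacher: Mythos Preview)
Your overall architecture---local existence, transport bounds on $\rho$, Stokes reformulation \eqref{sto} with $\bar\rho^{-(\alpha-1)}$ prefactors, bootstrap on a composite functional, continuation---matches the paper's. The genuine gap is in the sentence ``whence the $W^{2,q}$-bound on $u$ and the $L^1_tL^\infty_x$-bound on $\nabla u$ needed to close the Gr\"onwall inequality for $\|\nabla\rho\|_{L^q}$.'' You never explain how $\int_0^T\|\nabla u\|_{L^\infty}\,dt$ is bounded \emph{independently of $T$}, and with only the unweighted estimates you list it is not. Interpolating $\|\rho u_t\|_{L^q}\le C\bar\rho^{(5q-6)/4q}\|\sqrt\rho\,u_t\|_{L^2}^{(6-q)/2q}\|\nabla u_t\|_{L^2}^{3(q-2)/2q}$ and integrating in time via H\"older produces a residual factor $T^{1/2}$, because the two exponents sum to $1$. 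That $T^{1/2}$ cannot be killed by taking $\bar\rho$ large, since $\Lambda_0$ must not depend on $T$; the Gr\"onwall exponential for $\|\nabla\rho\|_{L^q}$ therefore blows up as $T\to\infty$ and the bootstrap fails to close.

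The missing device is Hoff-type \emph{time-weighted} estimates: one proves
\[
\sup_{t}\,t\|\sqrt\rho\,u_t\|_{L^2}^2+\bar\rho^\alpha\int_0^T t\|\nabla u_t\|_{L^2}^2\,dt\le C\bar\rho^\alpha,\qquad
\sup_{t}\,t^2\|\sqrt\rho\,u_t\|_{L^2}^2+\bar\rho^\alpha\int_0^T t^2\|\nabla u_t\|_{L^2}^2\,dt\le C\bar\rho,
\]
obtained by multiplying the $u_t$-energy identity by $t$ and $t^2$ respectively and feeding in the lower-order bounds. One then splits $\int_0^T\|\nabla u\|_{L^\infty}\,dt$ at $t=\min\{1,T\}$: on $[0,1]$ the weight $t$ absorbs the short-time singularity (the leftover $t^{-2q/(q+6)}$ is integrable since $q<6$), and on $[1,\infty)$ the weight $t^2$ makes the leftover $t^{-4q/(q+6)}$ integrable since $q>2$. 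Only then does one get $\int_0^T\|\nabla u\|_{L^\infty}\,dt\le C\bar\rho^{-D}$ with $D>0$ depending only on $q$ and $\alpha$, which is what actually closes the bootstrap. Your functional $\Phi(T)$ should accordingly track $\sup_t\|\nabla u\|_{L^2}^2$ and $\sup_t\|\nabla\rho\|_{L^q}$ rather than $\sup_t\|\nabla u\|_{H^1}^2$; the $H^2$ and $W^{2,q}$ norms of $u$ are not uniformly bounded and are only used pointwise via the Stokes estimate.
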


\begin{remark}
    Conditions \eqref{vis-c} and \eqref{ini} imply that the initial Reynolds number is suitably small, so the conclusion of Theorem 1.1 is equivalent to proving a fact that the flow is globally stable when the initial Reynolds number is small enough. 
\end{remark}
\begin{remark}
    Theorem \ref{global} can also be applied to boundary condition \eqref{ins}$_4$ when replaced by Navier-slip boundary conditions and periodic ones. However, the Cauchy problem presents essential difficulties that will be addressed in future work.
\end{remark}
We now provide our analysis and commentary on the key aspects of this paper. 
The main idea is to use time-weighted energy estimates to the compressible Navier–Stokes equations established by Hoff \cite{hoff1995global}, which is successfully used to the inhomogeneous incompressible Navier-Stokes equations by Huang-Wang \cite{craig2013global,huang2014global,huang2015global} and many other works, see \cite{hao2024globalwellposednessinhomogeneousnavierstokes,paicu2013global} and references therein.

Let's briefly sketch the proof. First we assume that $\mathcal{E}_\rho(T)$ is less than 3$\mathcal{E}_\rho(0)$  and $\mathcal{E}_u(T)$ is less than 3$\mathcal{E}_u(0)$,  then we prove that in fact $\mathcal{E}_\rho(T)$ is less than 2$\mathcal{E}_\rho(0)$ and $\mathcal{E}_u(T)$ is less than 2$\mathcal{E}_u(0)$ under the assumption that the initial density is large enough. On the other hand, the control of $\|\nabla u\|_{L^1_tL^\infty_x}$ leads to uniform estimates for other higher-order quantities, which guarantees the extension of local strong solutions.
One of the main ingredients is a time-independent estimate which is essential due to exponential time decay estimates for $u$ in a bounded domain. However, this is not the case for the whole space.

\section{Preliminaries}
First, the following local existence theory, where the initial density is strictly away from vacuum, can be shown by similar arguments as in Cho and Kim \cite{cho2004unique}:
\begin{lemma}\label{local}
    Assume that the initial data $(\rho_0, u_0)$ satisfies the regularity condition \eqref{ia}.
  %   and the compatibility condition
  %   \begin{equation}\label{cc}
		% -\mathrm{div}\left(\mu \rho_0^\alpha \bigl(\nabla u_0+\nabla^\bot u_0\bigr)\right)+\nabla P_0=\rho_0^\frac{1}{2}g,
  %   \end{equation}
  %   for some $(P_0,g)\in H^1\times L^2$. 
    Then there exists a small time $T$ and a unique strong solution $(\rho, u, P)$ to the initial boundary value problem (\ref{ins})--(\ref{bc}) such that
\begin{equation}\label{l-r}
    \left\{ \begin{array}{l}
    \rho\in C([0,T];W^{1,q}),\
    \nabla u, P\in C([0,T];H^1)\cap L^2([0,T];W^{1,q}),\\
    \rho_t \in C([0,T];L^q),\
    \sqrt{\rho}u_t\in L^\infty([0,T];L^2),\ 
    u_t\in L^2([0,T];H^1_0).
    \end{array} \right.
\end{equation}
Furthermore, if $T^\ast$is the maximal existence time of the local strong solution $(\rho, u)$, then either
$T^\ast=\infty$ or
\begin{equation}\label{blow-up}
    \sup_{0\leq t\leq T^\ast}\left(\|\nabla\rho\|_{L^q}+\|\nabla u\|_{L^2}\right)=\infty.
\end{equation}
\end{lemma}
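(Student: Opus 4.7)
The plan is to adapt the iteration scheme of Cho--Kim \cite{cho2004unique} to accommodate the density-dependent viscosity $\mu(\rho)=\mu\rho^\alpha$. Set up a Picard-type scheme: starting from $u^0\equiv 0$, and given $u^k$ in the class \eqref{l-r}, first solve the linear transport equation $\partial_t\rho^{k+1}+u^k\cdot\nabla\rho^{k+1}=0$ with $\rho^{k+1}|_{t=0}=\rho_0$. Since $\mathrm{div}\,u^k=0$ and $u^k$ vanishes on $\partial\Omega$, the flow map preserves pointwise bounds, so $\bar\rho\leq\rho^{k+1}\leq C_0\bar\rho$. Then let $u^{k+1}$ solve the linear variable-coefficient Stokes-type problem
\begin{equation*}
\rho^{k+1}\partial_t u^{k+1}+\rho^{k+1}(u^k\cdot\nabla)u^{k+1}-\mathrm{div}\bigl(2\mu(\rho^{k+1})d(u^{k+1})\bigr)+\nabla P^{k+1}=0,
\end{equation*}
subject to $\mathrm{div}\,u^{k+1}=0$, $u^{k+1}|_{\partial\Omega}=0$, $u^{k+1}|_{t=0}=u_0$, whose well-posedness in \eqref{l-r} follows from a standard Galerkin argument since the viscosity is bounded away from zero.

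The heart of the argument is to derive uniform-in-$k$ estimates on a short interval $[0,T]$ in the norms of \eqref{l-r}. For the density, differentiating the transport equation and testing against $|\nabla\rho^{k+1}|^{q-2}\nabla\rho^{k+1}$ yields
\begin{equation*}
\|\nabla\rho^{k+1}(t)\|_{L^q}\leq\|\nabla\rho_0\|_{L^q}\exp\!\Bigl(C\!\int_0^t\|\nabla u^k\|_{L^\infty}\,ds\Bigr).
\end{equation*}
For the velocity, test the momentum equation against $u^{k+1}_t$ to control $\|\sqrt{\rho^{k+1}}u^{k+1}_t\|_{L^2}^2$ and $\|\nabla u^{k+1}\|_{L^\infty_tL^2}$; then rewrite the momentum equation as a constant-viscosity Stokes system with source, and apply $W^{2,p}$ Stokes regularity to obtain $\nabla u^{k+1}, P^{k+1}\in H^1\cap L^2_tW^{1,q}$. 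The main technical obstacle is controlling the variable-viscosity contribution $\nabla\mu(\rho^{k+1})\cdot\nabla u^{k+1}=\alpha\mu(\rho^{k+1})^{(\alpha-1)/\alpha}\nabla\rho^{k+1}\cdot\nabla u^{k+1}$, whose highest-order factor $\nabla\rho^{k+1}$ lives only in $L^q$; here the choice $3<q<6$ is used together with $W^{1,q}\hookrightarrow L^\infty$ and $H^1\hookrightarrow L^6$ to absorb this term into the elliptic left-hand side, at the price of a factor $T^\theta$ that is made small by shrinking $T$. No compatibility condition is required since $\rho_0\geq\bar\rho>0$ and $u_0\in H^2$ together determine $u_t(0)\in L^2$ directly from \eqref{ins}$_2$ evaluated at $t=0$. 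With these uniform bounds, a contraction argument for $(\rho^{k+1}-\rho^k,u^{k+1}-u^k)$ in the weak norm $L^\infty_tL^2_x\cap L^2_tH^1_0$ produces a genuine limit solving \eqref{ins}--\eqref{bc}.

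Uniqueness follows by subtracting two solutions $(\rho_i,u_i)$, testing the difference of the momentum equations against $u_1-u_2$, using the transport structure to bound $\|\rho_1-\rho_2\|_{L^2}$, and closing via Gronwall with the a priori bounds. For the blow-up criterion \eqref{blow-up}, assume $T^\ast<\infty$ and $M:=\sup_{[0,T^\ast)}(\|\nabla\rho\|_{L^q}+\|\nabla u\|_{L^2})<\infty$. Re-running the a priori estimates with $M$ as input and using the embedding $W^{1,q}\hookrightarrow L^\infty$ (so that $\|\nabla\mu(\rho)\|_{L^q}$ is controlled) and the Stokes elliptic estimate shows that every norm in \eqref{l-r} stays finite up to $T^\ast$. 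Applying the local existence theorem at a time slightly less than $T^\ast$ extends the solution beyond $T^\ast$, contradicting maximality; hence \eqref{blow-up} must hold.
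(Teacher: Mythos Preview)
Your sketch is correct and follows exactly the route the paper indicates: the paper does not give a detailed proof of this lemma but simply states that it ``can be shown by similar arguments as in Cho and Kim \cite{cho2004unique}'', and your Picard iteration with uniform estimates, contraction in a weak norm, and the standard continuation argument for the blow-up criterion is precisely that adaptation. In fact you supply more detail than the paper does, correctly identifying that the strict lower bound $\rho_0\ge\bar\rho>0$ removes the need for a compatibility condition and that the restriction $3<q<6$ is what allows the variable-viscosity remainder $\nabla\mu(\rho)\cdot\nabla u$ to be absorbed via Sobolev embeddings.
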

In this paper, we will employ Bovosgii's theory which can be found in \cite{seregin2014lecture}.
\begin{lemma}\label{bovosgii}
    Let $\Omega$ be a bounded domain with Lipschitz boundary, $1 < p < \infty$. Given $b \in L^p(\Omega)$ with $\int_\Omega b dx=0$, there exists $v \in W_0^{1,p}(\Omega)$ with the following properties:
    \begin{equation*}
        \mathrm{div} v=b,
    \end{equation*}
    in $\Omega$, and
    \begin{equation}
        \|\nabla v\|_{L^p(\Omega)}\leq C(p)\|b\|_{L^p(\Omega)}.
    \end{equation}
\end{lemma}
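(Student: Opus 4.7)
The plan is to prove Bogovskii's lemma by explicit construction on star-shaped subdomains and then pasting together via a partition of unity. I would first reduce the general case to the case where $\Omega$ is star-shaped with respect to a ball. Since $\Omega$ is bounded Lipschitz, it admits a finite cover by open sets $\{\Omega_j\}_{j=1}^N$ such that each $\Omega_j \cap \Omega$ is star-shaped with respect to some open ball $B_j \Subset \Omega_j \cap \Omega$; picking a subordinate partition of unity $\{\varphi_j\}$, one writes $b = \sum_j b_j$ where each $b_j$ is supported in $\Omega_j \cap \Omega$ and has zero mean (the mean-zero condition is arranged by subtracting a suitable multiple of a bump in the overlaps, using the global mean-zero assumption on $b$). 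Solving $\mathrm{div}\,v_j = b_j$ on each star-shaped piece with $v_j \in W_0^{1,p}(\Omega_j\cap\Omega)$, extending by zero, and summing then produces the desired $v$.

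Next, on a star-shaped domain $D$ with respect to a ball $B$, I would use Bogovskii's explicit formula. Fix $\omega \in C_0^\infty(B)$ with $\int \omega = 1$, and define
\begin{equation*}
v(x) = \int_D b(y)\,N(x,y)\,dy,
\end{equation*}
where, writing $z = x-y$,
\begin{equation*}
N(x,y) = \frac{z}{|z|^n}\int_{|z|}^{\infty}\omega\!\left(y + r\tfrac{z}{|z|}\right) r^{n-1}\,dr.
\end{equation*}
A direct computation using the star-shapedness and the normalization $\int \omega = 1$ shows $\mathrm{div}_x v(x) = b(x) - \omega(x)\int_D b\,dy = b(x)$ (since $b$ has zero mean), and because $\omega$ is supported inside $D$, the support structure of $N$ forces $v$ to vanish on $\partial D$, giving $v \in W_0^{1,p}(D)$.

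The main analytic step, and the hardest one, is the $L^p$ estimate $\|\nabla v\|_{L^p} \leq C(p)\|b\|_{L^p}$. Differentiating under the integral, one splits $\partial_{x_k} v_i$ into a principal-value singular integral against a kernel $K_{ik}(x,y)$ plus a bounded integral operator with weakly singular kernel. A careful analysis shows that $K_{ik}$ is a Calder\'on--Zygmund kernel in the sense that it is homogeneous of degree $-n$ in $z = x-y$, smooth off the diagonal, and satisfies the cancellation property $\int_{|\xi|=1} K_{ik}(x,x-\xi)\,d\sigma(\xi) = 0$. By the standard Calder\'on--Zygmund theorem for non-convolution singular integrals (together with the $L^2$ estimate, which in this setting follows from an explicit Fourier/energy computation or from the $T1$ theorem), the principal-value operator is bounded on $L^p(\mathbb{R}^n)$ for every $1 < p < \infty$, and the remainder with a weakly singular kernel of order $1-n$ maps $L^p$ to $L^p$ by Young's inequality (or Hardy--Littlewood--Sobolev). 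Combining these yields $\|\nabla v\|_{L^p(D)} \leq C(p,D)\|b\|_{L^p(D)}$.

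The main obstacle will be the Calder\'on--Zygmund bound, both in verifying the cancellation condition for the kernel arising from Bogovskii's formula and in handling the dependence of the constant on the geometry (the ratio of the diameter of $D$ to the radius of $B$) so that after the partition-of-unity gluing the final constant depends only on $p$ and $\Omega$. Once the star-shaped estimate is in hand, the gluing step is bookkeeping: sum the $W_0^{1,p}$ solutions $v_j$ produced piecewise, and use the finiteness of the cover and the uniform $L^p$ bound on each $b_j$ (controlled by $\|b\|_{L^p}$ via the properties of $\{\varphi_j\}$) to obtain the claimed estimate for $v$.
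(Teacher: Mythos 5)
The paper does not prove this lemma at all: it is quoted as a known result (Bogovskii's theorem) with a citation to Seregin's lecture notes, so there is no in-paper argument to compare against. Your outline is the standard Bogovskii proof found in that literature --- the explicit kernel on star-shaped domains, the Calder\'on--Zygmund estimate for $\nabla v$, and the partition-of-unity reduction for a Lipschitz domain with the mean-zero bookkeeping in the overlaps --- and it is correct in structure; the only steps you gloss over (arranging each $b_j$ to have zero mean, and passing from smooth compactly supported $b$ to general $b\in L^p$ by density to get $v\in W_0^{1,p}$) are routine and handled exactly this way in the references.
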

Also, the well-known Gagliardo-Nirenberg inequality \cite{ladyzhenskaia1968linear} will be frequently used in this paper.
\begin{lemma}\label{G-N}
Assume that $\Omega$ is a bounded Lipschitz domain in $\mathbb{R}^3$. Let 1 $\leq q  +\infty$ be a positive extended real quantity. Let j and m  be non-negative integers such that $ j < m$. Furthermore, let $1 \leq r \leq \infty$ be a positive extended real quantity,  $p \geq 1$  be real and  $\theta \in [0,1]$  such that the relations
\begin{equation}
    \dfrac{1}{p} = \dfrac{j}{n} + \theta \left( \dfrac{1}{r} - \dfrac{m}{n} \right) + \dfrac{1-\theta}{q}, \qquad \dfrac jm \leq \theta \leq 1
\end{equation}
hold. Then, 
\begin{equation}
    \|\nabla^j u\|_{L^p(\Omega)} \leq C\|\nabla^m u\|_{L^r(\Omega)}^\theta\|u\|_{L^q(\Omega)}^{1-\theta} + C_1\|u\|_{L^q(\Omega)}
\end{equation}
where $u \in L^q(\Omega)$  such that  $\nabla^m u \in L^r(\Omega)$. Moreover, if $q>1$ and $r>3$,
\begin{equation}
\|u\|_{C(\bar{\Omega})}\leq C\|u\|^{q(r-3)/(3r+q(r-3))}_{L^q}\|\nabla u\|^{3r/(3r+q(r-3))}_{L^r}+C_2\|u\|_{L^q}.
\end{equation}
where $u \in L^q(\Omega)$  such that  $\nabla u \in L^r(\Omega)$.
In any case, the constant $C > 0$  depends on the parameters $j,\,m,\,n,\,q,\,r,\,\theta$, on the domain $\Omega$, but not on $u$.

In addition, if $u\cdot n|_{\partial\Omega} = 0$, we can choose $C_1 = C_2 = 0$.
\end{lemma}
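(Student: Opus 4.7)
The plan is to first establish the whole-space version of the inequality on $\mathbb{R}^n$ and then transfer it to the bounded Lipschitz domain $\Omega$ via extension. The exponent identity $\frac{1}{p} = \frac{j}{n} + \theta(\frac{1}{r} - \frac{m}{n}) + \frac{1-\theta}{q}$ is forced by scaling on $\mathbb{R}^n$: applying the target inequality to $u_\lambda(x) := u(\lambda x)$ and equating the powers of $\lambda$ that arise from the $L^p$, $L^r$ and $L^q$ norms on both sides yields precisely this relation, showing it is the only scaling-compatible choice of exponents and hence a necessary condition for a dilation-invariant whole-space estimate.

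On $\mathbb{R}^n$, for Schwartz functions, I would interpolate between two endpoints. At $\theta = 1$ the inequality reduces to the scaling-invariant Sobolev embedding $\|\nabla^j u\|_{L^{p^\ast}} \leq C\|\nabla^m u\|_{L^r}$ with $\frac{1}{p^\ast} = \frac{j}{n} + \frac{1}{r} - \frac{m}{n}$, which follows from Riesz potential representation and the Hardy–Littlewood–Sobolev inequality (or equivalently Littlewood–Paley decomposition). At $\theta = j/m$ the inequality is a Kolmogorov–Landau interpolation $\|\nabla^j u\|_{L^s} \leq C\|\nabla^m u\|_{L^r}^{j/m}\|u\|_{L^q}^{1-j/m}$, which I would obtain by iterating the elementary integration-by-parts bound $\|\nabla v\|_{L^s}^2 \leq C\|v\|_{L^{s_1}}\|\nabla^2 v\|_{L^{s_2}}$ together with Hölder's inequality at each step. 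The full range $j/m \leq \theta \leq 1$ then follows from the logarithmic convexity of $L^p$ norms, i.e., Hölder interpolation between the two endpoint inequalities.

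To pass from $\mathbb{R}^n$ to a bounded Lipschitz domain $\Omega$, I would invoke Stein's extension theorem, which provides a bounded linear operator $E : W^{m,r}(\Omega) \cap L^q(\Omega) \to W^{m,r}(\mathbb{R}^n) \cap L^q(\mathbb{R}^n)$ with $\|Eu\|_{L^q(\mathbb{R}^n)} \leq C\|u\|_{L^q(\Omega)}$ and $\|\nabla^m Eu\|_{L^r(\mathbb{R}^n)} \leq C(\|\nabla^m u\|_{L^r(\Omega)} + \|u\|_{L^q(\Omega)})$. Applying the whole-space inequality to $Eu$, restricting to $\Omega$, and splitting the resulting mixed term $(\|\nabla^m u\|_{L^r} + \|u\|_{L^q})^\theta \|u\|_{L^q}^{1-\theta}$ by Young's inequality produces the stated form with the lower-order tail $C_1 \|u\|_{L^q(\Omega)}$; this parasitic term is the footprint of working on a bounded domain. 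The $C(\bar\Omega)$ embedding for $q>1$ and $r>3$ then follows by composing Morrey's inequality $W^{1,r}(\mathbb{R}^3) \hookrightarrow C^{0,1-3/r}(\mathbb{R}^3)$ with the interpolation just derived, and the specific exponents $q(r-3)/(3r+q(r-3))$ and $3r/(3r+q(r-3))$ are fixed by the same scaling identity applied to $\|u\|_{L^\infty}$ versus $\|\nabla u\|_{L^r}$ and $\|u\|_{L^q}$.

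For the final assertion that $C_1 = C_2 = 0$ when $u \cdot n|_{\partial\Omega} = 0$, I would replace Stein's extension by a \emph{reflection extension} compatible with the boundary condition: after flattening the boundary locally via a partition of unity and diffeomorphisms, reflect the normal component oddly and the tangential components evenly across the flattened boundary. The vanishing normal trace ensures that the reflected vector field lies in $W^{m,r}(\mathbb{R}^n)$ with $\|\nabla^m Eu\|_{L^r(\mathbb{R}^n)} \leq C\|\nabla^m u\|_{L^r(\Omega)}$ directly, with no lower-order term. The main obstacle is exactly this reflection step: on a merely Lipschitz domain one cannot cleanly reflect across a smooth outward normal, so one must either assume additional boundary regularity or use a Calderón–Stein-type construction adapted to the boundary condition. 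Everything else in the proof is interpolation bookkeeping plus standard Sobolev machinery.
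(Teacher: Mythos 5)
The paper does not actually prove this lemma: it is quoted as a known Gagliardo--Nirenberg-type result with a citation to Ladyzhenskaya--Solonnikov--Ural'tseva, so there is no internal proof to compare against. Your outline of the main inequality --- scaling to identify the exponent relation, the Sobolev endpoint $\theta=1$, the Kolmogorov--Landau endpoint $\theta=j/m$, interpolation in between, and a Stein extension to pass from $\mathbb{R}^n$ to a bounded Lipschitz domain, which is what produces the additive term $C_1\|u\|_{L^q}$ --- is the standard route and is acceptable as a sketch.

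The genuine gap is your final step, where you claim $C_1=C_2=0$ under $u\cdot n|_{\partial\Omega}=0$ by constructing a reflection extension with a purely homogeneous bound $\|\nabla^m Eu\|_{L^r(\mathbb{R}^n)}\le C\|\nabla^m u\|_{L^r(\Omega)}$. This does not work as described: (i) as you yourself concede, odd/even reflection is unavailable on a merely Lipschitz boundary, and even on a smooth boundary the localization by a partition of unity introduces derivatives of cutoffs, hence unavoidable lower-order terms, so a bound by the seminorm alone cannot come out of that construction; (ii) more fundamentally, such a seminorm-only bound is impossible unless one has already excluded, quantitatively, the elements of the kernel of $\nabla^m$ compatible with the boundary condition (for $m=1$, a field close to a large constant with tiny gradient would violate it), and excluding that kernel is exactly a Poincar\'e-type inequality --- the real content your argument omits. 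The standard argument is simpler and needs no new extension: since $\Omega$ is bounded, its outward normals span $\mathbb{R}^3$, so no nonzero constant field has vanishing normal trace; a Rellich compactness (contradiction) argument then yields $\|u\|_{L^q}\le C\|\nabla u\|_{L^r}$ for such fields, and writing $\|u\|_{L^q}=\|u\|_{L^q}^{\theta}\|u\|_{L^q}^{1-\theta}\le C\|\nabla u\|_{L^r}^{\theta}\|u\|_{L^q}^{1-\theta}$ absorbs the additive term into the multiplicative one, which is how $C_1=C_2=0$ is obtained in the cases the paper actually uses ($m=1$ and the $C(\bar\Omega)$ bound). Note finally that for general $j<m$ the condition $u\cdot n|_{\partial\Omega}=0$ does not eliminate all polynomial obstructions (e.g. $u=Bx$ with $B$ antisymmetric satisfies $u\cdot n=0$ on a ball and has $\nabla^2u=0$), so any correct treatment must pass through this kernel discussion rather than through an extension operator with purely homogeneous bounds.
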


\section{A priori estimates}

For any fixed time $T>0$, $(\rho,u,P)$ is the unique local strong solution to \eqref{ins}-\eqref{bc} on $\Omega\times (0,T]$ with initial data $(\rho_0,u_0)$ satisfying \eqref{ia}, which is guaranteed by lemma 
\ref{local}.

Define
\begin{gather}\label{As1}
	\mathcal{E}_\rho(T) \triangleq \sup_{t\in[0,T] }\|\nabla \rho \|_{L^q},\\
	\mathcal{E}_u(T) \triangleq \bar\rho^{\alpha}\sup_{t\in[0,T] }\|\nabla u \|_{L^2}^2
	+  \int_0^{T}  \|\sqrt{\rho} u_t \|_{L^2}^2dt.
%	A_3(T) \triangleq \sup_{t\in[0,T] }\|\sqrt{\rho} u_t \|_{L^2}^2 + \bar \rho^{\alpha} \int_0^{T}  \|\nabla u_t \|_{L^2}^2dt.
\end{gather}

We have the following key proposition.

\begin{proposition}\label{pr}
Under the conditions of Theorem \ref{global}, there exists a  positive constant $\Lambda_0$ depending on $\Omega,C_0,\mu,\ \alpha$, and $\|\nabla\rho_0\|_{L^q}, \| u_0\|_{H^2}$ such that if $(\rho,u,P)$ is a smooth solution to the problem (\ref{ins})--(\ref{bc}) on $\Omega\times (0,T]$ satisfying
	\begin{equation}\label{a1}
		\mathcal{E}_\rho(T) \le 3\mathcal{E}_\rho(0),~ 
		\mathcal{E}_u(T) \le 3\mathcal{E}_u(0), 
	\end{equation}
	then the following estimates hold:
	\begin{equation}\label{a2}
		\mathcal{E}_\rho(T) \le 2\mathcal{E}_\rho(0),~
		\mathcal{E}_u(T) \le 2\mathcal{E}_u(0),
	\end{equation}
	provided 
	\begin{equation}
		\bar\rho\ge \Lambda_0(\Omega,C_0,\mu,\alpha,\|\nabla\rho_0\|_{L^q}, \| u_0\|_{H^2}).
	\end{equation}     
\end{proposition}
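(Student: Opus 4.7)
The strategy is the standard continuation/bootstrap argument: under the closed a priori bounds \eqref{a1}, re-derive the estimates for $\mathcal{E}_\rho(T)$ and $\mathcal{E}_u(T)$ with constants that are arbitrarily small multiples of the initial data once $\bar\rho$ is taken sufficiently large. The structural input is the reformulation \eqref{sto}: every source term carries a factor of $\rho^{-1}$ or $\rho^{1-\alpha}$, so all nonlinear terms are suppressed by $\bar\rho^{-(\alpha-1)}$ when $\alpha>1$. Throughout I would use the transport equation $\eqref{ins}_1$ and $\mathrm{div}\,u=0$ to propagate the pointwise bound $\bar\rho\le\rho(t,x)\le C_0\bar\rho$ on $[0,T]$ by the maximum principle.

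I would first carry out the basic energy identity: testing $\eqref{ins}_2$ against $u$ gives
\begin{equation*}
\frac12\frac{d}{dt}\int\rho|u|^2\,dx+2\mu\int\rho^\alpha|d|^2\,dx=0,
\end{equation*}
so that, via Korn and the density bounds, $\|u(t)\|_{L^2}^2\le C_0\|u_0\|_{L^2}^2$ and $\int_0^T\|\nabla u\|_{L^2}^2\,dt\lesssim\bar\rho^{1-\alpha}\|u_0\|_{L^2}^2$. This already produces an $O(\bar\rho^{1-\alpha})$ smallness at the $L^2_tH^1_x$ level. To control $\mathcal{E}_u$ I would then test $\eqref{ins}_2$ against $u_t$; using $\mathrm{div}\,u_t=0$ to kill the pressure and $\partial_t\mu(\rho)=-u\cdot\nabla\mu(\rho)$ from the transport equation, one obtains
\begin{equation*}
\frac{d}{dt}\int\mu(\rho)|d|^2\,dx+\int\rho|u_t|^2\,dx\le C\|u\|_{L^\infty}^2\|\sqrt{\rho}\,\nabla u\|_{L^2}^2+C\int|\mu'(\rho)||u||\nabla\rho||d|^2\,dx.
\end{equation*}
The convective term is absorbed by Gagliardo–Nirenberg (Lemma \ref{G-N}) and $\bar\rho^{-\alpha}$ smallness of $\|\nabla u\|_{L^2}$; the $\nabla\rho$-term uses $\mathcal{E}_\rho(T)\le 3\mathcal{E}_\rho(0)$ together with the extra $\bar\rho^{-1}$ factor coming from $\mu'(\rho)\rho^{-\alpha}$. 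Applying Stokes regularity to \eqref{sto} then yields $\|\nabla^2u\|_{L^p}+\|\nabla(P\rho^{-\alpha})\|_{L^p}$ controlled by $\bar\rho^{-(\alpha-1)}\|\sqrt\rho u_t\|_{L^2}$ plus similarly small terms, which closes the $\mathcal{E}_u$ bound after absorption.

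For $\mathcal{E}_\rho$, differentiating $\eqref{ins}_1$ in space and using Lemma \ref{bovosgii}-style pressure handling gives the Grönwall-type inequality
\begin{equation*}
\|\nabla\rho(t)\|_{L^q}\le\|\nabla\rho_0\|_{L^q}\exp\Bigl(C\int_0^t\|\nabla u\|_{L^\infty}\,ds\Bigr),
\end{equation*}
so everything reduces to proving $\int_0^T\|\nabla u\|_{L^\infty}\,ds$ is small. I would control $\|\nabla u\|_{L^\infty}$ by Gagliardo–Nirenberg interpolation between $\|\nabla u\|_{L^2}$ and $\|\nabla^2u\|_{L^q}$, with the first factor controlled in $L^2_t$ by the basic energy estimate ($O(\bar\rho^{(1-\alpha)/2})$) and the second by the Stokes regularity for \eqref{sto} together with the $\sqrt\rho u_t$ bound from the previous step. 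Combining these by Hölder in time and using $\mathcal{E}_u(T)\le 3\mathcal{E}_u(0)$ yields $\int_0^T\|\nabla u\|_{L^\infty}\,ds=o(1)$ as $\bar\rho\to\infty$, which improves $\mathcal{E}_\rho(T)$ from $3\mathcal{E}_\rho(0)$ to $2\mathcal{E}_\rho(0)$.

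\textbf{Main obstacle.} The delicate point is obtaining all of the above with constants \emph{independent} of $T$. A naive Grönwall argument yields exponentials in $T$; here one must instead leverage the weighted dissipation $\bar\rho^\alpha\|\nabla u\|_{L^2}^2$ (which produces Poincaré–Korn-driven decay of $u$ in a bounded domain) together with the $\bar\rho^{-(\alpha-1)}$ smallness in \eqref{sto} so that the bootstrap closes globally in time. In particular, taming the convection term $\rho u\cdot\nabla u$ tested against $u_t$, and extracting a time-integrable bound on $\|\nabla u\|_{L^\infty}$ rather than a merely $L^2_t$ one, is the step that genuinely uses the hypothesis $\alpha>1$ and distinguishes the present $3$D argument from the simpler planar case.
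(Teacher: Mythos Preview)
Your overall architecture matches the paper's: propagate $\bar\rho\le\rho\le C_0\bar\rho$, derive the basic energy identity, test \eqref{ins}$_2$ against $u_t$ for the $\mathcal{E}_u$ bound, use Stokes regularity on \eqref{sto}, and close $\mathcal{E}_\rho$ by Gr\"onwall once $\int_0^T\|\nabla u\|_{L^\infty}\,dt$ is shown to be small. You also correctly identify the $T$-independence as the real difficulty.

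The genuine gap is in how you propose to get $\int_0^T\|\nabla u\|_{L^\infty}\,dt$ small. Your plan is to interpolate $\|\nabla u\|_{L^\infty}$ between $\|\nabla u\|_{L^2}$ and $\|\nabla^2 u\|_{L^q}$ and ``combine by H\"older in time'' using only $\mathcal{E}_u(T)\le 3\mathcal{E}_u(0)$. This does not close: the $W^{2,q}$ Stokes estimate feeds on $\|\rho u_t\|_{L^q}$, and for $q>3$ Gagliardo--Nirenberg forces you to go through $\|\nabla u_t\|_{L^2}$, a quantity nowhere produced by your scheme. Moreover, even once you have an $L^2_t$ bound on $\|\nabla u_t\|_{L^2}$, a naive H\"older splitting over $[0,T]$ will yield a factor $T^\theta$; the allusion to Poincar\'e--Korn decay is in the right spirit but is not a mechanism.

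What the paper actually does---and what is missing from your sketch---is a Hoff-type time-weighted estimate on $u_t$: differentiate \eqref{ins}$_2$ in $t$, test against $t\,u_t$ (and then $t^2u_t$) to obtain
\[
\sup_t t\|\sqrt\rho\,u_t\|_{L^2}^2+\bar\rho^{\alpha}\!\int_0^T t\|\nabla u_t\|_{L^2}^2\,dt\le C\bar\rho^{\alpha},\qquad
\sup_t t^2\|\sqrt\rho\,u_t\|_{L^2}^2+\bar\rho^{\alpha}\!\int_0^T t^2\|\nabla u_t\|_{L^2}^2\,dt\le C\bar\rho.
\]
One then splits $\int_0^T\|\rho u_t\|_{L^q}\,dt$ at $t=\min\{1,T\}$, uses the $t$-weighted bounds on the short-time piece and the $t^2$-weighted bounds on the tail, and checks that the resulting powers of $t^{-1}$ are integrable (this is where $3<q<6$ enters). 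That is the step that delivers a $T$-independent bound of size $O(\bar\rho^{-c(\alpha-1)})$ on $\int_0^T\|\nabla u\|_{L^\infty}\,dt$ and hence the improvement of $\mathcal{E}_\rho$. Without this differentiated, time-weighted $u_t$ estimate your argument cannot reach $\|\nabla^2u\|_{L^q}$ at all, let alone uniformly in $T$.
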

First, as the density satisfies the transport equation \eqref{ins}$_1$ and making use of \eqref{ins}$_3$, one has the following lemma
\begin{lemma}
It holds that 
	\begin{equation}\label{2.3}
		\bar \rho \le \rho \le C_0 \bar \rho,\quad (x,t)\in \Omega \times [0,T].
	\end{equation}
\end{lemma}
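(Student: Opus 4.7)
The plan is to exploit the divergence-free condition $\dv u = 0$ (equation \eqref{ins}$_3$) to rewrite the continuity equation $\rho_t + \dv(\rho u) = 0$ as the pure transport equation $\rho_t + u \cdot \nabla \rho = 0$. Along particle trajectories the density is then conserved, so the pointwise bounds inherited from the initial data propagate for all $t \in [0,T]$.

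More precisely, I would introduce the flow map $X(t,y)$ solving
\begin{equation*}
\partial_t X(t,y) = u(X(t,y), t), \qquad X(0,y) = y.
\end{equation*}
From the local regularity \eqref{l-r} we have $\nabla u \in L^2((0,T); W^{1,q})$ with $q>3$, so the Sobolev embedding $W^{1,q}(\Omega) \hookrightarrow C(\bar\Omega)$ gives $\nabla u \in L^2((0,T); C(\bar\Omega))$, i.e. $u$ is spatially Lipschitz in an integrable-in-time sense. This is more than enough to apply the classical Cauchy--Lipschitz theorem and produce a well-defined family of diffeomorphisms $X(t,\cdot)\colon \Omega \to \Omega$; the boundary condition $u|_{\partial\Omega}=0$ prevents characteristics from leaving $\Omega$, and the divergence-free condition additionally makes $X(t,\cdot)$ volume-preserving, hence a bijection of $\Omega$.

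Differentiating $\rho(X(t,y), t)$ in $t$ and invoking the transport equation gives $\tfrac{d}{dt}\rho(X(t,y),t)=0$, so $\rho(X(t,y),t)=\rho_0(y)$. Since $X(t,\cdot)$ is a bijection of $\Omega$, every $x \in \Omega$ equals $X(t,y)$ for some $y$, and the hypothesis $\bar\rho \le \rho_0(y) \le C_0\bar\rho$ from \eqref{ia} immediately yields $\bar\rho \le \rho(x,t) \le C_0\bar\rho$ for all $(x,t) \in \Omega \times [0,T]$. I do not expect a genuine obstacle here: the argument is a routine method-of-characteristics computation, and the regularity provided by Lemma \ref{local} is precisely what makes the flow map well-defined. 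If one wishes to bypass the pointwise characteristic argument, one can instead mollify $u$, derive the bounds for the regularized problem, and pass to the limit via the DiPerna--Lions theory of renormalized solutions, which applies under the present Sobolev regularity of $u$.
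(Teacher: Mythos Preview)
Your proposal is correct and follows exactly the reasoning the paper intends: the paper does not write out a proof at all but simply remarks that since $\rho$ satisfies the transport equation \eqref{ins}$_1$ and $\dv u=0$ from \eqref{ins}$_3$, the bounds \eqref{2.3} hold. Your characteristics argument is the standard way to make this precise, and the regularity from Lemma~\ref{local} that you invoke is indeed sufficient.
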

Next, the basic energy inequality of the system \eqref{ins} reads
\begin{lemma}
	It holds that
	\begin{equation}\label{basic-est}
		\sup_{0\le t\le T} \bar \rho \| u\|_{L^2}^2   + \bar\rho^{\alpha} \int_{0}^{T} \|\nabla u\|_{L^2}^2 dt \le C\bar\rho.
	\end{equation}
\end{lemma}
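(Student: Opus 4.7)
The estimate is the classical basic energy identity for the system, and the only subtlety is tracking the correct powers of $\bar\rho$ on both sides, so the plan is straightforward.

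First I would test the momentum equation \eqref{ins}$_2$ against $u$ and integrate over $\Omega$. For the material-derivative part, combining $(\rho u)_t + \mathrm{div}(\rho u \otimes u)$ with the continuity equation \eqref{ins}$_1$ in the standard way gives
\begin{equation*}
\int \bigl[(\rho u)_t + \mathrm{div}(\rho u\otimes u)\bigr]\cdot u \, dx = \frac{d}{dt}\int \frac{1}{2}\rho|u|^2\,dx.
\end{equation*}
The pressure term vanishes since $\mathrm{div}\, u = 0$ and $u|_{\partial\Omega}=0$, so $\int\nabla P\cdot u\,dx=-\int P\,\mathrm{div}\, u\,dx = 0$. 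Integration by parts on the viscous term, using the symmetry of $d$, produces $2\int \mu(\rho)|d|^2\,dx$. Putting these together yields the energy identity
\begin{equation*}
\frac{d}{dt}\int \frac{1}{2}\rho|u|^2\,dx + 2\int \mu(\rho)|d|^2\,dx = 0.
\end{equation*}

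Next I would convert the dissipation term into one involving $|\nabla u|^2$. Because $\mathrm{div}\, u = 0$ and $u|_{\partial\Omega}=0$, an integration by parts gives $\int \partial_i u_j\, \partial_j u_i\,dx = -\int u_j\,\partial_j(\mathrm{div}\, u)\,dx = 0$, hence the pointwise expansion $2|d|^2 = |\nabla u|^2 + \partial_i u_j\, \partial_j u_i$ integrates to $2\int|d|^2\,dx = \int|\nabla u|^2\,dx$. Combining this with the weighted dissipation and the lower bound $\rho\ge\bar\rho$ from \eqref{2.3}, together with $\mu(\rho)=\mu\rho^{\alpha}$, gives $2\int \mu(\rho)|d|^2\,dx \ge \mu\bar\rho^{\alpha}\int|\nabla u|^2\,dx$. (Strictly speaking the Korn identity I used needs the weight to be constant; but a weighted Korn-type bound is not needed here because one can bound the weighted integral from below by the minimum of $\mu(\rho)$ and then apply the unweighted identity.)

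Finally I would integrate in time on $[0,T]$ to get
\begin{equation*}
\int \frac{1}{2}\rho(T)|u(T)|^2\,dx + \mu\bar\rho^{\alpha}\int_0^T\|\nabla u\|_{L^2}^2\,dt \le \int \frac{1}{2}\rho_0|u_0|^2\,dx.
\end{equation*}
Using $\rho\ge\bar\rho$ on the left and $\rho_0\le C_0\bar\rho$ on the right, and noting that $\|u_0\|_{L^2}$ is controlled by $\|u_0\|_{H^2}$ from \eqref{ia}, one arrives at
\begin{equation*}
\sup_{0\le t\le T}\bar\rho\|u\|_{L^2}^2 + \bar\rho^{\alpha}\int_0^T\|\nabla u\|_{L^2}^2\,dt \le C C_0\|u_0\|_{L^2}^2\,\bar\rho \le C\bar\rho,
\end{equation*}
with $C$ depending only on $C_0,\mu,\|u_0\|_{H^2}$, yielding \eqref{basic-est}. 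There is no real obstacle: the argument is completely elementary, and the only point requiring a sentence of attention is extracting the factor $\bar\rho^{\alpha}$ cleanly from the density-dependent dissipation via the pointwise bound $\mu(\rho)\ge\mu\bar\rho^{\alpha}$.
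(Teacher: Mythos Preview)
Your proof is correct and follows essentially the same route as the paper: test \eqref{ins}$_2$ against $u$, use the Korn-type identity $2\int|d|^2\,dx=\int|\nabla u|^2\,dx$ together with the pointwise lower bound $\mu(\rho)\ge\mu\bar\rho^{\alpha}$ to extract the dissipation factor, and then integrate in time using \eqref{2.3} on both sides. Your version is in fact slightly more explicit than the paper's in handling the pressure term and the unweighted Korn identity.
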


\begin{proof}
	Multiplying \eqref{ins}$_2$ by $u$ and integrating the resultant equation, we obtain after integration by parts that 
	\begin{equation}
		\begin{aligned}
			&\frac{1}{2} \frac{d}{dt} \|\sqrt{\rho} u\|_{L^2}^2 + \frac{\mu}{2^\alpha} \bar \rho^{\alpha}\|\nabla u\|_{L^2}^2 \\
			\le &\frac{1}{2} \frac{d}{dt} \|\sqrt{\rho} u\|_{L^2}^2 + \int 
			\mu\rho^\alpha|d|^2  dx=0.
		\end{aligned}
	\end{equation}
	Integrating the above inequality over $(0,T]$ leads to 
	\begin{equation}
		\begin{aligned}
			\sup_{0\le t\le T} \bar \rho \| u\|_{L^2}^2   + \bar\rho^{\alpha} \int_{0}^{T} \|\nabla u\|_{L^2}^2 dt \le C\bar\rho \| u_0\|_{L^2}^2.
		\end{aligned}
	\end{equation}
	The proof is completed.
\end{proof}
High-order a priori estimates rely on the following regularity results for density-dependent Stokes equations.
\begin{lemma}\label{stokes-e}
Assume that $\rho\in W^{1,q}, 3<q<6$, and $\bar \rho \le \rho \le C_0 \bar \rho$.
Let $(u, P) \in H_{0,\sigma}^1\times L^2$ be the unique weak solution to the boundary value problem
\begin{equation}\label{stokes}
	\left\{ \begin{array}{l}
		-\mathrm{div}(2\mu\rho^\alpha d)+\nabla P=F,~~\mathrm{in}~\Omega, \\
		\mathrm{div} u=0,~~\mathrm{in}~\Omega,\\
        \int \frac{P}{\rho^\alpha} dx=0,~~\mathrm{in}~\Omega.
	\end{array} \right.
\end{equation}
Then we have the following regularity results:

(1) If $F\in L^2$, then $(u, P)\in H^2\times H^1$ and
\begin{equation}\label{stokes-2}
\| u\|_{H^2}+\bigg\|\frac{P}{\rho^\alpha}\bigg\|_{H^1}
\leq C(\bar\rho^{-\alpha}+\bar\rho^{-\alpha-\frac{q}{q-3}}\|\nabla \rho \|_{L^q}^\frac{q}{q-3})\|F\|_{L^2};
\end{equation}

(2) If $F\in L^q$ for some $q\in (3,6)$ then $(u, P)\in W^{2,q}\times W^{1,q}$ and
\begin{equation}\label{stokes-q}
\| u\|_{W^{2,q}}+\bigg\|\frac{P}{\rho^\alpha}\bigg\|_{W^{1,q}}
\leq  C(\bar\rho^{-\alpha}+\bar\rho^{-\alpha-\frac{5q-6}{2(q-3)}}\|\nabla \rho \|_{L^q}^\frac{5q-6}{2(q-3)})\|F\|_{L^q}.
\end{equation}
Here the constant C in \eqref{stokes-2} and \eqref{stokes-q} depends on $\Omega, q$.
\end{lemma}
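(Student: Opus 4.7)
My plan is to rewrite \eqref{stokes} as a constant-coefficient Stokes system with a perturbative right-hand side, apply classical $L^p$-Stokes regularity, and absorb the perturbation using Gagliardo--Nirenberg interpolation (Lemma \ref{G-N}) together with Young's inequality. Setting $Q:=P\rho^{-\alpha}$, dividing \eqref{stokes}$_1$ by $\rho^\alpha$, using $\mathrm{div}\,u=0$ (so that $\mathrm{div}(2d)=\Delta u$), and redistributing the derivative of $\rho^\alpha$ between $\nabla P$ and $\nabla Q$ yields
\begin{equation*}
-\mu\Delta u+\nabla Q=G,\qquad G:=\rho^{-\alpha}F+2\mu\alpha\rho^{-1}(\nabla\rho)\cdot d-\alpha\rho^{-1}(\nabla\rho)\,Q,
\end{equation*}
together with $\mathrm{div}\,u=0$, $u|_{\partial\Omega}=0$, and $\int_\Omega Q\,dx=0$.

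Two baseline estimates are needed. First, testing \eqref{stokes}$_1$ against $u$ and invoking Korn plus Poincar\'e yields $\|\nabla u\|_{L^2}\le C\bar\rho^{-\alpha}\|F\|_{L^2}$. Second, I apply Bogovskii's Lemma \ref{bovosgii} to the zero-mean function $Q$ to produce $v\in H_0^1$ with $\mathrm{div}\,v=Q$ and $\|\nabla v\|_{L^2}\le C\|Q\|_{L^2}$; testing \eqref{stokes}$_1$ against this $v$ and using $\int\rho^\alpha Q^2\,dx\ge\bar\rho^\alpha\|Q\|_{L^2}^2$ gives the pressure baseline $\|Q\|_{L^2}\le C\bar\rho^{-\alpha}\|F\|_{L^2}$. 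These two bounds are the pivot of the whole argument.

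For part (1), the standard $L^2$-Stokes estimate applied to the reformulated system gives $X:=\|u\|_{H^2}+\|Q\|_{H^1}\le C\|G\|_{L^2}$. I bound the perturbative terms in $G$ by H\"older pairing $\|\nabla\rho\|_{L^q}$ against $L^{2q/(q-2)}$ norms of $\nabla u$ and $Q$; a Gagliardo--Nirenberg interpolation (between $L^2$ and $H^1$, using the Sobolev embedding $H^1\hookrightarrow L^6$, combined with Poincar\'e--Wirtinger for $Q$) gives
\begin{equation*}
\|\nabla u\|_{L^{2q/(q-2)}}+\|Q\|_{L^{2q/(q-2)}}\le C\bigl(\|\nabla u\|_{L^2}^{(q-3)/q}\|u\|_{H^2}^{3/q}+\|Q\|_{L^2}^{(q-3)/q}\|Q\|_{H^1}^{3/q}\bigr).
\end{equation*}
Plugging in the baseline bounds and applying Young's inequality with conjugate exponents $(q/3,\,q/(q-3))$ to the resulting term $C\bar\rho^{-1}\|\nabla\rho\|_{L^q}(\bar\rho^{-\alpha}\|F\|_{L^2})^{(q-3)/q}X^{3/q}$ absorbs $X$ and produces exactly \eqref{stokes-2}.

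Part (2) is handled by the same scheme, with the $L^q$-Stokes regularity bounding $Y:=\|u\|_{W^{2,q}}+\|Q\|_{W^{1,q}}$ by $C\|G\|_{L^q}$ and H\"older now pairing $\|\nabla\rho\|_{L^q}$ against $L^\infty$ norms of $d$ and $Q$. These $L^\infty$ norms are handled by the second Gagliardo--Nirenberg bound in Lemma \ref{G-N}, interpolating between the baseline $L^2$-bounds and $L^q$ control of the gradient; the exponent on $Y$ turns out to be $3q/(5q-6)$, and Young's inequality with conjugate exponents $\bigl((5q-6)/(3q),\,(5q-6)/(2(q-3))\bigr)$ then produces the claimed $\bar\rho^{-\alpha-(5q-6)/(2(q-3))}$ scaling in \eqref{stokes-q}. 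The main subtlety throughout is the pressure self-coupling $\alpha\rho^{-1}(\nabla\rho)Q$ appearing in $G$: without the Bogovskii-based baseline $L^2$-bound on $Q$ this term would be circular and unabsorbable, and once it is controlled, careful bookkeeping of the $\bar\rho$-powers produced by each Gagliardo--Nirenberg exponent and each Young's inequality application is required to land on the precise scaling stated.
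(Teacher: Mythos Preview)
Your proposal is correct and follows essentially the same approach as the paper: both rewrite the variable-coefficient Stokes system as a constant-coefficient one for $(u,Q)$ with $Q=P/\rho^\alpha$, obtain the baseline bounds $\|\nabla u\|_{L^2},\|Q\|_{L^2}\le C\bar\rho^{-\alpha}\|F\|_{L^2}$ by testing against $u$ and a Bogovskii lifting of $Q$, and then bootstrap via classical Stokes regularity, Gagliardo--Nirenberg interpolation, and Young's inequality. Your treatment of part~(2), spelling out the $L^\infty$ interpolation exponent $3q/(5q-6)$ and the Young conjugates $\bigl((5q-6)/(3q),(5q-6)/(2(q-3))\bigr)$, is in fact more explicit than the paper's ``Similarly''.
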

\begin{proof}
Multiply the first equation of $\eqref{ins}_2$ by $u$ and integrate over $\Omega$, then by Cauchy's inequality,
\begin{equation}
    \int2\rho^\alpha\vert d\vert^2 dx=\int F\cdot udx\leq \|F\|_{L^2}\|u\|_{L^2}.
\end{equation}
Note that
\begin{equation}\label{deg}
    2\int \vert d\vert^2dx=\int\vert\nabla u\vert^2dx,
\end{equation}
hence, it follows from \eqref{2.3} and \eqref{basic-est} that
\begin{equation}
    \|\nabla u\|_{L^2}\leq C\bar{\rho}^{-\alpha}\| u\|_{L^2}\| F\|_{L^2}\leq C\bar{\rho}^{-\alpha}\| F\|_{L^2}.
\end{equation}

Since $\int\frac{P}{\rho^\alpha}dx =0$, according to Lemma \ref{bovosgii}, there exists a function $v\in H^1_0$, such that
\begin{equation}
    \mathrm{div}v=\frac{P}{\rho^\alpha},
\end{equation}
and
\begin{equation}
    \|\nabla v\|_{L^2}\leq C\bigg\|\frac{P}{\rho^\alpha}\bigg\|_{L^2}.
\end{equation}
Multiplying the first equation of \eqref{stokes} by $-v$, and integrating over $\Omega$, then making use of Poincar\'e's inequality, one obtains
\begin{equation}
\begin{split}
    \int\frac{P^2}{\rho^\alpha}dx
    &=-\int F\cdot vdx+2\int \mu\rho^\alpha d:\nabla v dx\\
    &\leq C\|F\|_{L^2}\|v\|_{L^2}
    +C\bar{\rho}^\alpha\|\nabla u\|_{L^2}\|\nabla v\|_{L^2}\\
    &\leq C\|F\|_{L^2}\|\nabla v\|_{L^2}\\
    &\leq C\|F\|_{L^2}\bigg\|\frac{P}{\rho^\alpha}\bigg\|_{L^2}.
\end{split}
\end{equation}
On the other hand,
\begin{equation}
    \int\frac{P^2}{\rho^\alpha}dx\geq C\bar{\rho} \int\frac{P^2}{\rho^{2\alpha}}dx,
\end{equation}
hence
\begin{equation}
    \bigg\|\frac{P}{\rho^\alpha}\bigg\|_{L^2}\leq C\bar{\rho}^{-\alpha}\|F\|_{L^2}.
\end{equation}

Rewrite \eqref{stokes}$_2$ as 
\begin{equation}
	-\mu\Delta u+\nabla\biggl(\frac{P}{\rho^\alpha}\biggr)=\frac{F}{\rho^\alpha}+2\mu\alpha \rho^{-1}\nabla \rho \cdot d
 +\alpha \rho^{-1}\nabla \rho \frac{P}{\rho^\alpha}. 
\end{equation}
Stokes estimates and Lemma \ref{G-N} yields
\begin{equation}
	\begin{aligned}
		&\|\nabla^2 u\|_{L^2}+\bigg\|\nabla\biggl(\frac{P}{\rho^\alpha}\biggr)\bigg\|_{L^2} \\	
		\leq& C\biggl(\bar\rho^{-\alpha}\|F\|_{L^2}
        +\bar\rho^{-1} \|\nabla\rho\cdot\nabla u\|_{L^2}
        +\bar\rho^{-1} \|\nabla\rho\cdot\frac{P}{\rho^\alpha}\|_{L^2}\biggr)  \\
        \leq& C\biggl(\bar\rho^{-\alpha}\|F\|_{L^2}
        +\bar\rho^{-1} \|\nabla \rho \|_{L^q} \|\nabla u\|_{L^{\frac{2q}{q-2}}}
        +\bar\rho^{-1} \|\nabla \rho \|_{L^q} \bigg\|\frac{P}{\rho^\alpha}\bigg\|_{L^{\frac{2q}{q-2}}} \biggr) \\
		\leq& C\biggl(\bar\rho^{-\alpha}\|F\|_{L^2}
        +\bar\rho^{-1} \|\nabla \rho \|_{L^q} \|\nabla u\|_{L^2}^\frac{q-3}{q} \|\nabla^2 u\|_{L^2}^\frac{3}{q}
        +\bar\rho^{-1} \|\nabla \rho \|_{L^q} \bigg\|\frac{P}{\rho^\alpha}\bigg\|_{L^2}^\frac{q-3}{q}\bigg\|\nabla\biggl(\frac{P}{\rho^\alpha}\biggr)\bigg\|_{L^2}^\frac{3}{q} \biggr). 
	\end{aligned}
\end{equation}
By Young’s inequality,
\begin{equation}
	\begin{aligned}
		&\|\nabla^2 u\|_{L^2}+\bigg\|\nabla\biggl(\frac{P}{\rho^\alpha}\biggr)\bigg\|_{L^2} \\	
		\leq& C\bar\rho^{-\alpha}\|F\|_{L^2}
        +C\bar\rho^{-\frac{q}{q-3}} \|\nabla \rho \|_{L^q}^\frac{q}{q-3} \biggl(\|\nabla u\|_{L^2}+\bigg\|\frac{P}{\rho^\alpha}\bigg\|_{L^2} \biggr)  \\
        \leq& C(\bar\rho^{-\alpha}+\bar\rho^{-\alpha-\frac{q}{q-3}}\|\nabla \rho \|_{L^q}^\frac{q}{q-3})\|F\|_{L^2}.
	\end{aligned}
\end{equation}
Similarly,
\begin{equation}
\|\nabla^2 u\|_{L^q}+\bigg\|\nabla\biggl(\frac{P}{\rho^\alpha}\biggr)\bigg\|_{L^q}
        \leq  C(\bar\rho^{-\alpha}+\bar\rho^{-\alpha-\frac{5q-6}{2(q-3)}}\|\nabla \rho \|_{L^q}^\frac{5q-6}{2(q-3)})\|F\|_{L^q}.
\end{equation}

\end{proof}

% \begin{equation}
% 	\begin{aligned}
% 		&\|\nabla^2 u\|_{L^2}+\bigg\|\nabla\biggl(\frac{P}{\rho^\alpha}\biggr)\bigg\|_{L^2} \\	
% 		\le& C \bar\rho^{-\alpha+\frac12}\|\sqrt{\rho}u_t\|_{L^2}+ \bar\rho^{-\alpha+1}  \|u\cdot \nabla u\|_{L^2} + \bar\rho^{-1} \|\nabla\rho\cdot\nabla u\|_{L^2}
%         +\bar\rho^{-1} \|\nabla\rho\cdot\frac{P}{\rho^\alpha}\|_{L^2}  \\
% 		\le & C \bar\rho^{-\alpha+\frac12}\|\sqrt{\rho}u_t\|_{L^2}+ \bar\rho^{-\alpha+1} \|\nabla u\|_{L^2}^{\frac{3}{2}} \|\nabla u\|_{L^6}^{\frac{1}{2}} + \bar\rho^{-1} \|\nabla \rho \|_{L^q} \|\nabla u\|_{L^{\frac{2q}{q-2}}}\\
% 		\le & \frac{1}{2} \|\nabla u\|_{L^6} + C \bar\rho^{-\alpha+\frac12}\|\sqrt{\rho}u_t\|_{L^2}+ \bar\rho^{-2\alpha+2} \|\nabla u\|_{L^2}^{3}  + \bar\rho^{-\frac{q}{q-3}} \|\nabla \rho \|_{L^q}^{\frac{q}{q-3}} \|\nabla u\|_{L^{2}},
% 	\end{aligned}
% \end{equation}
% and 
% \begin{equation}
% 	\begin{aligned}
% 		&\|\nabla^2 u\|_{L^q} \le C \|H\|_{L^q}\\
% 		\le& C \bar\rho^{-\alpha}\|-\rho(u_t+u\cdot \nabla u)+2\mu \nabla \rho^\alpha \cdot \mathcal{D}u\|_{L^q} \\
% 		\le& C \bar\rho^{-\alpha}\|{\rho}u_t\|_{L^q}+ \bar\rho^{-\alpha+1}  \|u\cdot \nabla u\|_{L^q} + \bar\rho^{-1} \|\nabla \rho \mathcal{D}u\|_{L^q}\\
% 		\le &C \left(\bar\rho^{-\alpha+\frac{5q-6}{4q}}\|\sqrt{\rho}u_t\|_{L^2}^{\frac{6-q}{2q}} \|\nabla u_t\|_{L^2}^{\frac{3(q-3)}{2q}} + \bar\rho^{-\alpha+1}  \| u\|_{L^q} \| \nabla u\|_{L^\infty} + \bar\rho^{-1} \|\nabla \rho\|_{L^q} \| \nabla u\|_{L^\infty}\right),
% 	\end{aligned}
% \end{equation}

As a consequence, we have the following high-order estimate of the velocities which will be used frequently.

\begin{lemma}
	Under the assumption \eqref{a1}, it holds that 
	\begin{equation}\label{H2}
	    \| u\|_{H^2} \leq C(\bar\rho^{\frac{1}{2}-\alpha}\|\sqrt{\rho} u_t\|_{L^2}
+\bar\rho^{2-2\alpha}\|\nabla u\|_{L^2}^3),
	\end{equation}
		and
        \begin{equation}\label{W2q}
	    \| u\|_{W^{2,q}} \leq C \bar\rho^{-\alpha}  \|\rho u_t\|_{L^q} + C \bar \rho^{(1-\alpha)\frac{5q-6}{q}} \|\nabla u\|_{L^2}^{\frac{6(q-1)}{q}}.
	\end{equation}	
\end{lemma}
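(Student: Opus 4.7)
The idea is to apply the Stokes regularity lemma (Lemma \ref{stokes-e}) to the momentum equation with $F = -\rho \dot u = -\rho u_t - \rho u\cdot\nabla u$, and then control the right–hand side in terms of $\sqrt\rho u_t$ (or $\rho u_t$) plus a convective contribution that we interpolate and absorb by Young's inequality.

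First I would rewrite \eqref{ins}$_2$ (using \eqref{ins}$_1$ and $\dv u=0$) in the form
\begin{equation*}
-\dv(2\mu\rho^\alpha d)+\nabla P=-\rho\dot u,
\end{equation*}
and observe that, because the a priori bound \eqref{a1} gives $\|\nabla\rho\|_{L^q}\le 3\|\nabla\rho_0\|_{L^q}\le C$, both coefficients in \eqref{stokes-2} and \eqref{stokes-q} are of size $C\bar\rho^{-\alpha}$ (using $\bar\rho\ge 1$ to dominate the $\bar\rho^{-\alpha-q/(q-3)}$ and $\bar\rho^{-\alpha-(5q-6)/(2(q-3))}$ terms). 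Lemma \ref{stokes-e} then reduces the problem to estimating $\|\rho\dot u\|_{L^2}$ and $\|\rho\dot u\|_{L^q}$.

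For \eqref{H2}, I would bound $\|\rho u_t\|_{L^2}^2\le C\bar\rho\|\sqrt\rho u_t\|_{L^2}^2$ directly from \eqref{2.3}, and for the convective part use Hölder and Gagliardo–Nirenberg in 3D:
\begin{equation*}
\|\rho u\cdot\nabla u\|_{L^2}\le C\bar\rho\|u\|_{L^6}\|\nabla u\|_{L^3}\le C\bar\rho\|\nabla u\|_{L^2}^{3/2}\|\nabla^2 u\|_{L^2}^{1/2}.
\end{equation*}
Plugging into \eqref{stokes-2}, the term $C\bar\rho^{1-\alpha}\|\nabla u\|_{L^2}^{3/2}\|\nabla^2 u\|_{L^2}^{1/2}$ is handled by Young's inequality $ab\le \tfrac12 a^2+\tfrac12 b^2$ to absorb $\|\nabla^2 u\|_{L^2}$ into the left side, leaving exactly $C\bar\rho^{2-2\alpha}\|\nabla u\|_{L^2}^3$. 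Together with $C\bar\rho^{-\alpha}\cdot\bar\rho^{1/2}\|\sqrt\rho u_t\|_{L^2}$ this gives \eqref{H2}.

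The estimate \eqref{W2q} requires more delicate interpolation, which I expect to be the main technical obstacle. I would write $\|\rho u\cdot\nabla u\|_{L^q}\le C\bar\rho\|u\|_{L^\infty}\|\nabla u\|_{L^q}$ and then, via the Gagliardo–Nirenberg inequality in Lemma \ref{G-N} with $n=3$ and interpolation endpoints $W^{2,q}$ and $\{ \nabla u\in L^2\}$, derive
\begin{equation*}
\|u\|_{L^\infty}\le C\|u\|_{W^{2,q}}^{q/(5q-6)}\|\nabla u\|_{L^2}^{(4q-6)/(5q-6)},\qquad
\|\nabla u\|_{L^q}\le C\|u\|_{W^{2,q}}^{(3q-6)/(5q-6)}\|\nabla u\|_{L^2}^{2q/(5q-6)},
\end{equation*}
(these exponents come from the usual scaling equations $0=(-2+3/q)b+(1-b)/2$ and $-1+3/q=(-2+3/q)a+(1-a)/2$). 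Multiplying these bounds gives
\begin{equation*}
\|u\|_{L^\infty}\|\nabla u\|_{L^q}\le C\|u\|_{W^{2,q}}^{(4q-6)/(5q-6)}\|\nabla u\|_{L^2}^{(6q-6)/(5q-6)}.
\end{equation*}
Inserting into \eqref{stokes-q} and applying Young's inequality with the conjugate exponent $(5q-6)/q$ absorbs the $\|u\|_{W^{2,q}}$ factor into the left–hand side and produces the power $6(q-1)/q$ on $\|\nabla u\|_{L^2}$ and the power $(1-\alpha)(5q-6)/q$ on $\bar\rho$; the other term $C\bar\rho^{-\alpha}\|\rho u_t\|_{L^q}$ comes from $\|\rho u_t\|_{L^q}$ in $\|F\|_{L^q}$. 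This will yield \eqref{W2q}. The main care is to verify that the interpolation exponents lie in $[0,1]$ for $3<q<6$ and that Young's inequality can indeed be applied with conjugate exponent $(5q-6)/q>1$, which it can.
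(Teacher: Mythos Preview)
Your proposal is correct and follows essentially the same approach as the paper: apply Lemma~\ref{stokes-e} with $F=\rho\dot u$, use \eqref{a1} and $\bar\rho\ge1$ to reduce the coefficients to $C\bar\rho^{-\alpha}$, then interpolate the convective term and absorb via Young's inequality. The only cosmetic difference is that for \eqref{W2q} the paper splits $\|u\cdot\nabla u\|_{L^q}\le\|u\|_{L^6}\|\nabla u\|_{L^{6q/(6-q)}}$ rather than your $\|u\|_{L^\infty}\|\nabla u\|_{L^q}$, but both routes arrive at the identical interpolation $\|u\|_{W^{2,q}}^{(4q-6)/(5q-6)}\|\nabla u\|_{L^2}^{6(q-1)/(5q-6)}$ and the same absorption.
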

\begin{proof}
Let 
\begin{equation}
    F=\rho \dot u
\end{equation}
in Lemma \ref{stokes-e}. 
\eqref{stokes-2} together with \eqref{a1} and \eqref{basic-est} gives
\begin{equation}\label{d2uu}
\begin{aligned}
\|u\|_{H^2}
\leq& C(\bar\rho^{-\alpha}+\bar\rho^{-\alpha-\frac{q}{q-3}}\|\nabla \rho \|_{L^q}^\frac{q}{q-3})\|\rho \dot  u\|_{L^2}\\
\leq& C(\bar\rho^{\frac{1}{2}-\alpha}+\bar\rho^{\frac{1}{2}-\alpha-\frac{q}{q-3}}\|\nabla \rho \|_{L^q}^\frac{q}{q-3})\|\sqrt\rho u_t\|_{L^2}\\
&+C(\bar\rho^{1-\alpha}+\bar\rho^{1-\alpha-\frac{q}{q-3}}\|\nabla \rho \|_{L^q}^\frac{q}{q-3})\|u\|_{L^6}\|\nabla u\|_{L^3}\\
\leq& C(\bar\rho^{\frac{1}{2}-\alpha}+\mathcal{E}_\rho(0)^\frac{q}{q-3}\bar\rho^{\frac{1}{2}-\alpha-\frac{q}{q-3}})\|\sqrt\rho u_t\|_{L^2}\\
&+C(\bar\rho^{1-\alpha}+\mathcal{E}_\rho(0)^\frac{q}{q-3}\bar\rho^{1-\alpha-\frac{q}{q-3}})\|\nabla u\|_{L^2}^\frac{3}{2}\|\nabla u\|_{H^1}^\frac{1}{2}\\
\leq& C(\bar\rho^{\frac{1}{2}-\alpha}+\mathcal{E}_\rho(0)^\frac{q}{q-3}\bar\rho^{\frac{1}{2}-\alpha-\frac{q}{q-3}})\|\sqrt\rho u_t\|_{L^2}\\
&+C(\bar\rho^{2-2\alpha}+\mathcal{E}_\rho(0)^\frac{2q}{q-3}\bar\rho^{2-2\alpha-\frac{2q}{q-3}})\|\nabla u\|_{L^2}^3\\
\leq& C(\bar\rho^{\frac{1}{2}-\alpha}\|\sqrt{\rho} u_t\|_{L^2}
+\bar\rho^{2-2\alpha}\|\nabla u\|_{L^2}^3),
\end{aligned}
\end{equation}
provided $\bar\rho\geq 1$. 
Similarly, Gagliardo-Nirenber inequality together with \eqref{stokes-q} yields
    \begin{equation}
		\begin{aligned} 
		\| u\|_{W^{2,q}} \le & C \bar\rho^{-\alpha}  \|\rho \dot u\|_{L^q} \\
        \le & C \bar\rho^{-\alpha} (  \|\rho u_t\|_{L^q} + \bar \rho \|u\|_{L^6} \| \nabla u\|_{L^{\frac{6q}{6-q}}})\\
        \le & C \bar\rho^{-\alpha} (  \|\rho u_t\|_{L^q} + \bar \rho \|\nabla u\|_{L^2}^{\frac{6(q-1)}{5q-6}} \|\nabla  u\|_{W^{1,q}}^{\frac{4q-6}{5q-6}})\\
        \leq & \frac12 \| \nabla u\|_{W^{1,q}} + C \bar\rho^{-\alpha}  \|\rho u_t\|_{L^q} + C \bar \rho^{(1-\alpha)\frac{5q-6}{q}} \|\nabla u\|_{L^2}^{\frac{6(q-1)}{q}} \\
        \leq & C \bar\rho^{-\alpha}  \|\rho u_t\|_{L^q} + C \bar \rho^{(1-\alpha)\frac{5q-6}{q}} \|\nabla u\|_{L^2}^{\frac{6(q-1)}{q}}.
		\end{aligned}
	\end{equation}    
\end{proof}

Now we are ready to deal with an estimate to $\mathcal{E}_u(T)$.
\begin{lemma}\label{L_2}
There exists a positive constant $\Lambda_1$ such that 
\begin{equation}
\mathcal{E}_u(T)\le 2\mathcal{E}_u(0),
\end{equation}
and
\begin{equation}\label{tdu}
\bar\rho^{\alpha}\sup_{t\in[0,T]}t\|\nabla u \|_{L^2}^2
+\int_0^{T}t\|\sqrt{\rho} u_t \|_{L^2}^2dt
\leq C\bar\rho ,
\end{equation}
provided $\bar \rho >\Lambda_1=\Lambda_1(\Omega,C_0,\mu,\alpha,\|\nabla\rho_0\|_{L^q}, \| u_0\|_{H^2})$.
\end{lemma}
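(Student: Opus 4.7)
The plan is to derive a Hoff-type energy estimate by testing the momentum equation against $u_t$, then exploit the largeness of $\bar\rho$ to close a bootstrap for $\mathcal{E}_u$. Multiplying $\eqref{ins}_2$ by $u_t$ and integrating, the pressure contribution vanishes by $\mathrm{div}\,u_t=0$, while the viscous term is handled using the symmetry of $d$ to identify
\begin{equation*}
-\int\mathrm{div}(2\mu\rho^\alpha d)\cdot u_t\,dx = \frac{d}{dt}\int\mu\rho^\alpha|d|^2\,dx - \int\mu\alpha\rho^{\alpha-1}\rho_t|d|^2\,dx.
\end{equation*}
Substituting $\rho_t=-u\cdot\nabla\rho$ from the transport equation, this yields the identity
\begin{equation*}
\frac{d}{dt}\int\mu\rho^\alpha|d|^2\,dx + \|\sqrt\rho u_t\|_{L^2}^2 = -\int\rho(u\cdot\nabla u)\cdot u_t\,dx - \int\mu\alpha\rho^{\alpha-1}(u\cdot\nabla\rho)|d|^2\,dx.
\end{equation*}

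The convective term is estimated by H\"older, $H^1\hookrightarrow L^6$, and $\|\nabla u\|_{L^3}\le C\|\nabla u\|_{L^2}^{1/2}\|u\|_{H^2}^{1/2}$, giving a bound of the form $C\sqrt{\bar\rho}\,\|\sqrt\rho u_t\|_{L^2}\|\nabla u\|_{L^2}^{3/2}\|u\|_{H^2}^{1/2}$. Substituting the Stokes estimate \eqref{H2} for $\|u\|_{H^2}$ and applying Young's inequality to absorb a small multiple of $\|\sqrt\rho u_t\|_{L^2}^2$ into the LHS leaves residual terms of order $C\bar\rho^{3-2\alpha}\|\nabla u\|_{L^2}^6$. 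The viscosity error term is treated by the same philosophy: one bounds $\|u\|_{L^\infty}$ by $\|u\|_{H^2}$ via Sobolev and interpolates $\|\nabla u\|_{L^{2q/(q-2)}}$ between $\|\nabla u\|_{L^2}$ and $\|u\|_{H^2}$, then invokes \eqref{H2} and the bootstrap $\mathcal{E}_\rho(T)\le 3\mathcal{E}_\rho(0)$; the crucial point is that, thanks to $\alpha>1$, every residual term after Young's inequality has a strictly negative net power of $\bar\rho$.

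Integrating in time on $[0,T]$, the uniform bound $\|\nabla u\|_{L^2}^2\le C\|\nabla u_0\|_{L^2}^2$ (obtained by dividing $\mathcal{E}_u(T)\le 3\mathcal{E}_u(0)$ by $\bar\rho^\alpha$) combined with the basic energy bound $\int_0^T\|\nabla u\|_{L^2}^2\,dt\le C\bar\rho^{1-\alpha}$ from \eqref{basic-est} lets one control integrals of the residuals by $C\bar\rho^{-\delta}\mathcal{E}_u(0)$ for some $\delta=\delta(\alpha)>0$. Choosing $\bar\rho\ge\Lambda_1$ large enough then yields $\mathcal{E}_u(T)\le 2\mathcal{E}_u(0)$. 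The time-weighted bound \eqref{tdu} follows by multiplying the same identity by $t$ before integrating: the extra term $\int_0^T\int\mu\rho^\alpha|d|^2\,dx\,dt$ generated by $\partial_t(t)=1$ is controlled by $C\bar\rho^\alpha\int_0^T\|\nabla u\|_{L^2}^2\,dt\le C\bar\rho$ from \eqref{basic-est}, and the nonlinear terms are again absorbed by the same $\bar\rho$-smallness mechanism.

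The hardest step will be the viscosity error term $\int\mu\alpha\rho^{\alpha-1}(u\cdot\nabla\rho)|d|^2\,dx$: the prefactor $\bar\rho^{\alpha-1}$ grows with $\bar\rho$ and the integrand is cubic in $\nabla u$, so one must choose the H\"older splitting and the interpolation exponents very carefully so that the $\bar\rho^{-\alpha}$ gained from \eqref{H2} and the positive power of $\|\nabla u\|_{L^2}$ controlled by \eqref{basic-est} together beat $\bar\rho^{\alpha-1}$. The assumption $\alpha>1$ is exactly the threshold at which this balance of exponents becomes strictly negative in $\bar\rho$, allowing the bootstrap to close.
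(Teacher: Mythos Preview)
Your proposal is correct and follows essentially the same strategy as the paper: test $\eqref{ins}_2$ against $u_t$, handle the convective term via the interpolation $\|\nabla u\|_{L^3}\le C\|\nabla u\|_{L^2}^{1/2}\|u\|_{H^2}^{1/2}$ together with \eqref{H2}, control the viscosity commutator $\int\mu(\rho^\alpha)_t|d|^2\,dx$ by a H\"older/Gagliardo--Nirenberg splitting that again funnels through \eqref{H2}, and then use the basic energy \eqref{basic-est} and the bootstrap $\eqref{a1}$ to make every residual a negative power of $\bar\rho$; the time-weighted bound is obtained by multiplying the same differential inequality by $t$. The only cosmetic difference is that the paper places $u$ in $L^6$ and $|\nabla u|^2$ in $L^{12q/(5q-6)}$ (then interpolates), whereas you suggest $u\in L^\infty\hookleftarrow H^2$ with $\nabla u\in L^{2q/(q-2)}$; both splittings close for $\alpha>1$, and the paper finishes the weighted estimate with Gronwall rather than direct absorption, but this does not change the argument.
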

\begin{proof}
	Multiplying \eqref{ins}$_2$ by $u_t$, and integrating by parts, we have that 
	\begin{equation}\label{51}
		\begin{aligned}
			&\frac{d}{dt}\int \mu\rho^\alpha |d|^2dx+  \int\rho |u_t|^2dx \\
			&= -\int  \rho u \cdot \nabla u \cdot u_t dx+ \int  \mu(\rho^\alpha)_t |d|^2 dx.
		\end{aligned}
	\end{equation}
	It follows from H\"older and Sobolev inequalities that 
	\begin{equation}\label{52}
        \begin{aligned}
		\int  \rho u \cdot \nabla u \cdot u_t dx
        &\leq C\bar\rho^\frac{1}{2}\|\sqrt{\rho} u_t\|_{L^2}\|u\|_{L^6}\|\nabla u\|_{L^3} \\
        &\leq C\bar\rho^\frac{1}{2}\|\sqrt{\rho} u_t\|_{L^2}\|\nabla u\|_{L^2}^\frac{3}{2}\|\nabla u\|_{H^1}^\frac{1}{2}\\
        &\leq C\bar\rho^\frac{1}{2}\|\sqrt{\rho} u_t\|_{L^2}\|\nabla u\|_{L^2}^\frac{3}{2}(\bar\rho^{\frac{1}{2}-\alpha}\|\sqrt{\rho} u_t\|_{L^2}
        +\bar\rho^{2-2\alpha}\|\nabla u\|_{L^2}^3)^\frac{1}{2}\\
        &\leq \frac{1}{4}\|\sqrt{\rho} u_t\|_{L^2}^2
        +C(\bar\rho^{3-2\alpha}+\bar\rho^{1-2\alpha})\|\nabla u\|_{L^2}^6.
        \end{aligned}
	\end{equation}
Using the fact that
\begin{equation}\label{rhoat}
    \partial_t(\rho^\alpha)+u\cdot\nabla\rho^\alpha=0,
\end{equation}
due to \eqref{ins}$_1$ and \eqref{ins}$_4$, which together with \eqref{d2uu} yields
\begin{equation}\label{53}
\begin{aligned}
\int\mu(\rho^\alpha)_t |d|^2 dx&\le C \bar \rho^{\alpha-1} \int  \left|\nabla \rho \cdot u\right||\nabla u|^2 dx\\
&\le C \bar \rho^{\alpha-1} \|\nabla \rho \|_{L^q}\|u\|_{L^6} \|\nabla u\|_{L^{\frac{12q}{5q-6}}}^2\\
&\le C\bar{\rho}^{\alpha-1} \|\nabla \rho \|_{L^q}\|\nabla u\|_{L^2}^{\frac{5q-6}{2q}} \|\nabla u\|_{H^1}^{\frac{q+6}{2q}}\\
&\le C\mathcal{E}_\rho(0)\bar{\rho}^{\alpha-1} \|\nabla u\|_{L^2}^{\frac{5q-6}{2q}} (\bar\rho^{\frac{1}{2}-\alpha} \|\sqrt{\rho} u_t\|_{L^2}
+\bar\rho^{2-2\alpha}\|\nabla u\|_{L^2}^3 )^{\frac{q+6}{2q}}\\
&\leq \frac{1}{4} \|\sqrt{\rho} u_t\|_{L^2}^2 + C \bar \rho^{\frac{6}{q}(1-\alpha)} \|\nabla u\|_{L^2}^{4+\frac{6}{q}}
+C\bar\rho^{-\alpha+\frac{6}{q}(1-2\alpha)}\|\nabla u\|_{L^2}^\frac{2(5q-6)}{3(q-2)}.
\end{aligned}
\end{equation}
Then \eqref{a1} together with \eqref{51}-\eqref{53} implies	
\begin{equation}\label{t1}
\begin{aligned}
\frac{d}{dt}\int \mu\rho^\alpha |d|^2dx+  \int\rho |u_t|^2dx 
&\leq C(\bar\rho^{3-2\alpha}+\bar\rho^{1-2\alpha}+\bar \rho^{\frac{6}{q}(1-\alpha)}+\bar\rho^{-\alpha+\frac{6}{q}(1-2\alpha)})\|\nabla u\|_{L^2}^4\\
&\leq C\bar\rho^{3-2\alpha}\|\nabla u\|_{L^2}^4\\
&\leq C\bar\rho^{3-2\alpha}\|\nabla u\|_{L^2}^2,
\end{aligned}
\end{equation}
since $\frac{2(5q-6)}{3(q-2)}\geq 4$ and $\bar\rho>1$.

Integrating \eqref{t1} with respect to $t$ over $(0, T]$, we get
from \eqref{deg}, \eqref{2.3} and \eqref{basic-est} that
\begin{equation}
\begin{aligned}
\bar\rho^{\alpha}\sup_{t\in[0,T] }\|\nabla u \|_{L^2}^2
	+  \int_0^{T}  \|\sqrt{\rho} u_t \|_{L^2}^2dt
 &\leq \bar\rho^{\alpha} \|\nabla u_0\|_{L^2}^2
 +C\bar{\rho}^{3-2\alpha}\int_0^T\|\nabla u\|_{L^2}^2dt\\
 &\leq M\bar\rho^{\alpha}+C_1\bar{\rho}^{4-3\alpha}\\
 &\leq 2M\bar\rho^{\alpha}=2\mathcal{E}_u(0),
\end{aligned}
\end{equation}
provided 
\begin{equation}
    M \triangleq \|\nabla u_0\|_{L^2}^2,
\end{equation}
and
\begin{equation}
    \bar{\rho}\geq \biggl(\frac{C_1}{M}\biggr)^\frac{1}{4(\alpha-1)}\triangleq \Lambda_1.
\end{equation}
Finally, multiplying \eqref{t1} by $t$ and applying Gronwall’s inequality, we obtain
\begin{equation}
\begin{aligned}
&\bar\rho^{\alpha}\sup_{t\in[0,T]}t\|\nabla u \|_{L^2}^2
+\int_0^{T}t\|\sqrt{\rho} u_t \|_{L^2}^2dt\\
&\leq \int_{0}^{T} \int \mu\rho^\alpha |d|^2dxdt 
\cdot\exp{\bigg\{C\bar\rho^{3-3\alpha}\int_0^{T}  \|\nabla u\|_{L^2}^2dt\bigg\}}\\
&\leq C\bar\rho \cdot\exp{\{C\bar\rho^{4-4\alpha}\}}\\
&\leq C\bar\rho,
\end{aligned}
\end{equation}
due to $\alpha>1$.
\end{proof}
In order to close the estimate of $\mathcal{E}_\rho$, we need the following time-weight estimates.
\begin{lemma}
    It holds that 
\begin{equation}\label{trut}
    \sup_{0\le t\le T} t \|\sqrt{\rho} u_t \|_{L^2}^2 +  \bar\rho^\alpha\int_{0}^{T} t\|\nabla u_t\|^2_{L^2} dt
    \leq C\bar\rho^\alpha
\end{equation}
and
\begin{equation}\label{ttrut}
    \sup_{0\le t\le T} t^2 \|\sqrt{\rho} u_t \|_{L^2}^2 +  \bar\rho^\alpha\int_{0}^{T} t^2\|\nabla u_t\|^2_{L^2} dt
    \leq C\bar\rho.
\end{equation}
\end{lemma}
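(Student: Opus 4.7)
The plan is to derive a differential inequality for $\|\sqrt{\rho}\,u_t\|_{L^2}^2$ with dissipation $\bar\rho^\alpha\|\nabla u_t\|_{L^2}^2$ and then integrate it against the weights $t$ and $t^2$ in succession. Applying $\partial_t$ to \eqref{ins}$_2$, rewritten as $\rho u_t+\rho u\cdot\nabla u+\nabla P=\mathrm{div}(2\mu\rho^\alpha d)$, pairing with $u_t$, discarding the pressure via $\mathrm{div}\,u_t=0$, rewriting the $\rho_t$-weighted terms by means of $\rho_t=-u\cdot\nabla\rho$ together with integration by parts, and bounding the principal viscous contribution from below via Korn's inequality, I expect
\begin{equation*}
\tfrac12\tfrac{d}{dt}\|\sqrt\rho\,u_t\|_{L^2}^2+c\mu\bar\rho^\alpha\|\nabla u_t\|_{L^2}^2\leq |J_1|+|J_2|+|J_3|,
\end{equation*}
where $J_1=\int\rho(u_t\cdot\nabla u)\cdot u_t\,dx$ is the convective cross-term, $J_2=\int\rho_t(u\cdot\nabla u)\cdot u_t\,dx$ comes from differentiating the advection, and $J_3=\int 2\mu(\rho^\alpha)_t\,d{:}\nabla u_t\,dx$ is the variable-viscosity remainder.

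Next I would bound each $J_i$ by H\"older and Gagliardo--Nirenberg, converting $\|\nabla u\|_{L^3}$ and $\|u\|_{L^\infty}$ into polynomials in $\|\sqrt\rho\,u_t\|_{L^2}$ and $\|\nabla u\|_{L^2}$ via the $H^2$ estimate \eqref{H2}, while keeping $\|\nabla\rho\|_{L^q}$ under control through \eqref{a1}. The target form of each bound is $|J_i|\leq \tfrac{c\mu\bar\rho^\alpha}{8}\|\nabla u_t\|_{L^2}^2+A_i(t)\|\sqrt\rho\,u_t\|_{L^2}^2+B_i(t)$, with all explicit powers of $\bar\rho$ in $A_i,B_i$ non-positive when $\alpha>1$, so that $\int_0^T B_i\,dt$ is tame by $\int_0^T\|\nabla u\|_{L^2}^2\,dt\leq C\bar\rho^{1-\alpha}$ from \eqref{basic-est} and the $A_i$ are $t$-integrable by $t\|\nabla u\|_{L^2}^2\leq C\bar\rho^{1-\alpha}$ from \eqref{tdu}.

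For \eqref{trut} I multiply the differential inequality by $t$ and integrate over $[0,T]$: the derivative $\frac{d}{dt}(t\|\sqrt\rho u_t\|_{L^2}^2)$ produces a residual $\int_0^T\|\sqrt\rho u_t\|_{L^2}^2\,dt\leq 2\mathcal E_u(0)\leq C\bar\rho^\alpha$ by Lemma \ref{L_2}, after which Gronwall closes the estimate using the time-integrability of the $A_i$. For \eqref{ttrut} I repeat with weight $t^2$: the residual becomes $2\int_0^T t\|\sqrt\rho u_t\|_{L^2}^2\,dt$, which I would handle by combining Poincar\'e $\|\sqrt\rho u_t\|_{L^2}^2\leq C\bar\rho\|\nabla u_t\|_{L^2}^2$ with the freshly-proved $\bar\rho^\alpha\int_0^T t\|\nabla u_t\|_{L^2}^2\,dt\leq C\bar\rho^\alpha$ to obtain $\int_0^T t\|\sqrt\rho u_t\|_{L^2}^2\,dt\leq C\bar\rho$. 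The extra factor of $t$ thus strips one power of $\bar\rho^{\alpha-1}$ and improves the right-hand side from $C\bar\rho^\alpha$ in \eqref{trut} to $C\bar\rho$ in \eqref{ttrut}.

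The main obstacle is the viscous remainder $J_3$: unpacking $(\rho^\alpha)_t=-\alpha\rho^{\alpha-1}u\cdot\nabla\rho$ gives a cubic product $\int(u\cdot\nabla\rho)(d{:}\nabla u_t)\,dx$ in which $\nabla u_t$ must appear only linearly if the Korn dissipation is to absorb it. This forces a careful four-factor H\"older split along the lines of $\|u\|_{L^\infty}\|\nabla u\|_{L^{2q/(q-2)}}\|\nabla\rho\|_{L^q}\|\nabla u_t\|_{L^2}$, followed by Gagliardo--Nirenberg interpolation of the first two factors and use of \eqref{H2}; the remaining $\bar\rho$-power count only collapses to something non-positive thanks to $\alpha>1$, mirroring the role of the exponent $3-2\alpha$ that drives the closure of Lemma \ref{L_2}.
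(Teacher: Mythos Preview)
Your strategy coincides with the paper's: differentiate the momentum equation in $t$, test with $u_t$, bound the four commutator terms by H\"older/Gagliardo--Nirenberg together with \eqref{H2}, and then run Gronwall against the weights $t$ and $t^2$. One term has slipped through your bookkeeping, however: the contribution $\int\rho_t|u_t|^2\,dx=-\int\mathrm{div}(\rho u)|u_t|^2\,dx$, which arises when $\partial_t$ hits the density in $\rho u_t$, does \emph{not} cancel against the $\rho u\cdot\nabla u_t$ cross-term (the two add rather than subtract) and must be estimated on its own; the paper carries it as a separate $I_i$ and bounds it either by $\|\nabla\rho\|_{L^q}\|u\|_{L^{2q/(q-2)}}\|u_t\|_{L^4}^2$ or, after an integration by parts, by $\bar\rho\|u\|_{L^6}\|u_t\|_{L^3}\|\nabla u_t\|_{L^2}$, both of which fit your template $\tfrac{c\mu}{8}\bar\rho^\alpha\|\nabla u_t\|_{L^2}^2+A(t)\|\sqrt\rho\,u_t\|_{L^2}^2$. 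Your Poincar\'e route for the $t^2$ residual, $\int_0^T t\|\sqrt\rho\,u_t\|_{L^2}^2\,dt\le C\bar\rho\int_0^T t\|\nabla u_t\|_{L^2}^2\,dt\le C\bar\rho$, is a clean alternative to the paper's direct appeal to \eqref{tdu}; either way yields the same $C\bar\rho$ and the improvement from $\bar\rho^\alpha$ to $\bar\rho$ follows.
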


\begin{proof}
Take the $t$-derivative of the momentum equations, multiply the resulting equation by $tu_t$, and after integrating by parts, we have that 
\begin{equation}\label{k2}
    \begin{aligned}
        &\frac{t}{2} \frac{d}{dt} \int \rho |u_t|^2 dx + 2\mu t \int\rho^\alpha |d_t|^2dx\\
        =&t\int  \dv(\rho u) |u_t|^2 dx 
        -t\int  \rho u_t \cdot \nabla u \cdot u_t dx 
        +t\int  \dv(\rho u) u \cdot \nabla u \cdot u_t dx  \\
        &+2\mu t \int (\rho^\alpha)_t d: d_tdx\\
        =&\sum_{i=1}^{4} I_i.
    \end{aligned}
\end{equation}

It follows from H\"older and Sobolev inequalities that
\begin{equation}
    \begin{aligned}
        I_1\leq& C t \int |\rho \nabla u|  |u_t|^2 dx  \\
        \le &C t\bar \rho^{\frac12}\|\sqrt{\rho} u_t \|_{L^2} \|\nabla u\|_{L^3}\| u_t \|_{L^6} \\
        \le &Ct  \bar\rho^{\frac12}\|\sqrt{\rho} u_t \|_{L^2} \|\nabla u\|_{L^2}^{\frac12} \|\nabla u\|_{L^6}^{\frac12} \| \nabla u_t \|_{L^2}\\
        \le & \frac{\mu}{8} t \bar\rho^{\alpha} \|\nabla u_t\|_{L^2}^2 
        +Ct\bar \rho^{1-\alpha}\|\sqrt{\rho} u_t \|_{L^2}^2 \|\nabla u\|_{L^2} \|\nabla u\|_{L^6}\\
        \leq & \frac{\mu}{8} t \bar\rho^{\alpha} \|\nabla u_t\|_{L^2}^2 
        +Ct\bar\rho^{-\alpha} \|\nabla u\|_{L^2} \|\sqrt{\rho} u_t \|_{L^2}^2\|\nabla u\|_{H^1}\\
        \leq & \frac{\mu}{8} t \bar\rho^{\alpha} \|\nabla u_t\|_{L^2}^2
        +Ct\bar\rho^{-\alpha} \|\nabla u\|_{L^2} \|\sqrt{\rho} u_t \|_{L^2}^2
        (\bar\rho^{\frac{1}{2}-\alpha}\|\sqrt{\rho} u_t\|_{L^2}
        +\bar\rho^{2-2\alpha}\|\nabla u\|_{L^2}^3)\\
        \leq & \frac{\mu}{8} t \bar\rho^{\alpha} \|\nabla u_t\|_{L^2}^2
        +Ct\bar\rho^{\frac{1}{2}-2\alpha} \|\nabla u\|_{L^2} \|\sqrt{\rho} u_t \|_{L^2}^3
        +Ct\bar\rho^{2-3\alpha} \|\nabla u\|_{L^2}^4 \|\sqrt{\rho} u_t \|_{L^2}^2,
    \end{aligned}
\end{equation}
also $I_2$ can be estimated as follows
\begin{equation}
    \begin{aligned}
        I_2\leq& Ct\int|\nabla\rho\cdot u||u_t|^2dx\\
        \leq &C t\|\nabla \rho\|_{L^q} \|u \|_{L^{\frac{2q}{q-2}}} \| u_t \|_{L^4}^{2}\\
        \leq &Ct \bar \rho^{-\frac14} \|\nabla \rho\|_{L^q} \|u \|_{L^{2}}^{\frac{q-3}{q}} \|\nabla u \|_{L^{2}}^{\frac{3}{q}} \|\sqrt{\rho} u_t \|_{L^2}^{\frac12} \|\nabla u_t \|_{L^2}^{\frac32}  \\
        \le & \frac{\mu}{8} t \bar\rho^{\alpha} \|\nabla u_t\|_{L^2}^2 
        +C t \bar \rho^{-3\alpha-1} \|\nabla \rho\|_{L^q}^4 \|\nabla u \|_{L^{2}}^{\frac{12}{q}} \|\sqrt{\rho} u_t \|_{L^2}^2 \\
        \leq & \frac{\mu}{8} t \bar\rho^{\alpha} \|\nabla u_t\|_{L^2}^2 
        + Ct \mathcal{E}_\rho(0)^4 \bar\rho^{-3\alpha-1} \|\nabla u \|_{L^2}^\frac{12}{q} \|\sqrt{\rho} u_t \|_{L^2}^2.
    \end{aligned}
\end{equation}
Similarly, $I_3$ can be bounded by
\begin{equation}
    \begin{aligned}
        I_3\leq & t\int |\nabla \rho \cdot u| |u| | \nabla u| |u_t| dx\\
        \leq & Ct \|\nabla \rho\|_{L^q}  \|u\|_{L^6}^2  \|\nabla u\|_{L^{\frac{2q}{q-2}}} \|u_t\|_{L^6} \\
        \leq & \frac{\mu}{8} t \bar\rho^{\alpha} \|\nabla u_t\|_{L^2}^2 + Ct \bar\rho^{ - \alpha} \|\nabla \rho \|_{L^q}^2 \|\nabla u\|_{L^2}^\frac{6(q-1)}{q}\|\nabla u\|_{H^1}^\frac{6}{q}\\
        \leq & \frac{\mu}{8} t \bar\rho^{\alpha} \|\nabla u_t\|_{L^2}^2 
        +C\mathcal{E}_\rho(0)^2t \bar\rho^{ - \alpha} \|\nabla u\|_{L^2}^\frac{6(q-1)}{q}(\bar\rho^{\frac{1}{2}-\alpha}\|\sqrt{\rho} u_t\|_{L^2}
        +\bar\rho^{2-2\alpha}\|\nabla u\|_{L^2}^3)^\frac{6}{q}\\
        \leq & \frac{\mu}{8} t \bar\rho^{\alpha} \|\nabla u_t\|_{L^2}^2
        +Ct\rho^{ -\alpha+2(1-\alpha)\frac{6}{q}}
        \|\nabla u\|_{L^2}^{6+\frac{12}{q}}\\
        &+Ct\bar\rho^{ -\alpha+\frac{6}{q}(\frac{1}{2}-\alpha)}(\|\nabla u\|_{L^2}^2
        \|\sqrt{\rho} u_t\|_{L^2}^2+\|\nabla u\|_{L^2}^{6+\frac{6}{q-3}}).
    \end{aligned}
\end{equation}
Using \eqref{rhoat}, we have
\begin{equation}\label{I4}
    \begin{aligned}
        I_4\leq & Ct\bar\rho^{\alpha-1}\int |\nabla \rho \cdot u| |\nabla  u||\nabla  u_t| dx\\
        \leq & Ct\bar \rho^{\alpha-1}  \|\nabla \rho\|_{L^q}  \|u\|_{L^\frac{3q}{q-3}}  \|\nabla u\|_{L^6} \|\nabla u_t\|_{L^2}\\
        \leq & \frac{\mu}{8} t\bar \rho^{\alpha} \|\nabla u_t\|_{L^2}^2
        +Ct\bar \rho^{\alpha-2} \|\nabla \rho \|_{L^q}^2  \|\nabla u\|_{L^2}^{3-\frac{6}{q}} \|\nabla u\|_{L^6}^{1+\frac{6}{q}}\\
        \leq & \frac{\mu}{8}t \bar\rho^{\alpha} \|\nabla u_t\|_{L^2}^2 
        + C\mathcal{E}_\rho(0)^2 t\bar \rho^{\alpha-2}  \|\nabla u\|_{L^2}^{3-\frac{6}{q}} \|\nabla u\|_{H^1}^{1+\frac{6}{q}}\\
        \leq & \frac{\mu}{8}t \bar \rho^{\alpha} \|\nabla u_t\|_{L^2}^2 
        +Ct \bar\rho^{\alpha-2}  \|\nabla u\|_{L^2}^{3-\frac{6}{q}}(\bar\rho^{\frac{1}{2}-\alpha}\|\sqrt{\rho} u_t\|_{L^2}
        +\bar\rho^{2-2\alpha}\|\nabla u\|_{L^2}^3)^{1+\frac{6}{q}}\\
        \leq & \frac{\mu}{8}t \bar \rho^{\alpha} \|\nabla u_t\|_{L^2}^2  
        +Ct \bar\rho^{-\frac{3}{2}+(\frac{1}{2}-\alpha)\frac{6}{q}}\|\nabla u\|_{L^2}^{3-\frac{6}{q}}\|\sqrt{\rho} u_t\|_{L^2}^{1+\frac{6}{q}}
        +Ct \bar\rho^{-\alpha+2(1-\alpha)\frac{6}{q}}\|\nabla u\|_{L^2}^{6+\frac{12}{q}}.
    \end{aligned}
\end{equation}
Combining all the above estimates \eqref{k2}–\eqref{I4},
\eqref{a1} and \eqref{basic-est}, we deduce
\begin{equation}\label{k3}
    \begin{aligned}
        &\frac{d}{dt}t \int \rho |u_t|^2 dx 
        + t \bar\rho^\alpha \int|\nabla u_t|^2dx\\
        \leq & \int \rho |u_t|^2 dx +Ct\bar\rho^{\frac{1}{2}-2\alpha} \|\nabla u\|_{L^2} \|\sqrt{\rho} u_t \|_{L^2}^3
        +Ct\bar\rho^{2-3\alpha} \|\nabla u\|_{L^2}^4 \|\sqrt{\rho} u_t \|_{L^2}^2\\
        &+ Ct \mathcal{E}_\rho(0)^4 \bar\rho^{-3\alpha-1} \|\nabla u \|_{L^2}^\frac{12}{q} \|\sqrt{\rho} u_t \|_{L^2}^2
        +Ct\bar\rho^{ -\alpha+\frac{6}{q}(\frac{1}{2}-\alpha)}(\|\nabla u\|_{L^2}^2
        \|\sqrt{\rho} u_t\|_{L^2}^2+\|\nabla u\|_{L^2}^{6+\frac{6}{q-3}})\\
        &+Ct\rho^{ -\alpha+2(1-\alpha)\frac{6}{q}}
        \|\nabla u\|_{L^2}^{6+\frac{12}{q}}
        +Ct \bar\rho^{-\frac{3}{2}+(\frac{1}{2}-\alpha)\frac{6}{q}}\|\nabla u\|_{L^2}^{3-\frac{6}{q}}\|\sqrt{\rho} u_t\|_{L^2}^{1+\frac{6}{q}}\\
        \leq &\int \rho |u_t|^2 dx
        +Ct\bar\rho^{\frac{1}{2}-2\alpha} \|\nabla u\|_{L^2} \|\sqrt{\rho} u_t \|_{L^2}^3
        +Ct \bar\rho^{-\frac{3}{2}+(\frac{1}{2}-\alpha)\frac{6}{q}}\|\nabla u\|_{L^2}^{3-\frac{6}{q}}\|\sqrt{\rho} u_t\|_{L^2}^{1+\frac{6}{q}}\\
        &+Ct(\bar\rho^{2-3\alpha}+\bar\rho^{ -\frac{1}{2}+\frac{6}{q}(\frac{1}{2}-\alpha)}) \|\nabla u\|_{L^2}^2 \|\sqrt{\rho} u_t \|_{L^2}^2\\
        &+Ct(\rho^{ -\alpha+2(1-\alpha)\frac{6}{q}}+\bar\rho^{ -\frac{1}{2}+\frac{6}{q}(\frac{1}{2}-\alpha)})
        \|\nabla u\|_{L^2}^8,
    \end{aligned}
\end{equation}
due to $q\in (3,6)$ and \eqref{a1}. Thus, Gronwall's inequality yields
\begin{equation}
    \begin{aligned}
        &\sup_{0\le t\le T} t \|\sqrt{\rho} u_t \|_{L^2}^2 
        +\bar\rho^\alpha\int_{0}^{T} t\|\nabla u_t\|^2_{L^2} dt\\
        \leq & C\int_0^T \left( t(\bar\rho^{ -\alpha+2(1-\alpha)\frac{6}{q}}+\bar\rho^{ -\frac{1}{2}+\frac{6}{q}(\frac{1}{2}-\alpha)})
        \|\nabla u\|_{L^2}^8
        +\|\sqrt{\rho} u_t\|_{L^2}^2 \right) dt\\
        &\cdot\exp{\left\{\int_0^T \left(\bar\rho^{\frac{1}{2}-2\alpha} \|\nabla u\|_{L^2} \|\sqrt{\rho} u_t \|_{L^2}
        +\bar\rho^{-\frac{3}{2}+(\frac{1}{2}-\alpha)\frac{6}{q}}\|\nabla u\|_{L^2}^{3-\frac{6}{q}}\|\sqrt{\rho} u_t\|_{L^2}^{\frac{6}{q}-1}\right)dt\right\}}\\
        &\cdot\exp{\left\{ \bigl(\bar\rho^{2-3\alpha}+\bar\rho^{ -\frac{1}{2}+\frac{6}{q}(\frac{1}{2}-\alpha)}\bigr)\int_0^T \|\nabla u\|_{L^2}^2dt\right\}}.
    \end{aligned}
\end{equation}
Taking advantage of \eqref{a1}, \eqref{basic-est} and \eqref{tdu}, we obtain
\begin{equation}\label{111}
    \begin{aligned}
        &\int_0^T t(\bar\rho^{ -\alpha+2(1-\alpha)\frac{6}{q}}+\bar\rho^{ -\frac{1}{2}+\frac{6}{q}(\frac{1}{2}-\alpha)})
        \|\nabla u\|_{L^2}^8 dt\\
        \leq& C (\bar\rho^{ -\alpha+2(1-\alpha)\frac{6}{q}}+\bar\rho^{ -\frac{1}{2}+\frac{6}{q}(\frac{1}{2}-\alpha)})
        \sup_{t\in[0,T]}\|\nabla u \|_{L^2}^4
        \cdot\sup_{t\in[0,T]}t\|\nabla u \|_{L^2}^2
        \cdot\int_0^T\|\nabla u \|_{L^2}^2dt\\
        \leq& C(\bar\rho^{ -\alpha+2(1-\alpha)\frac{6+q}{q}}+\bar\rho^{ -\frac{1}{2}+\frac{6}{q}(\frac{1}{2}-\alpha)+2(1-\alpha)}).
    \end{aligned}
\end{equation}
H\"older's inequality yields
\begin{equation}
    \begin{aligned}
        &\int_0^T \bar\rho^{\frac{1}{2}-2\alpha} \|\nabla u\|_{L^2} \|\sqrt{\rho} u_t \|_{L^2} dt\\
        \leq& \bar\rho^{\frac{1}{2}-2\alpha}
        \left(\int_0^T\|\nabla u\|_{L^2}^2dt\right)^\frac{1}{2}
        \left(\int_0^T\|\sqrt{\rho} u_t\|_{L^2}^2dt\right)^\frac{1}{2}\\
        \leq& C\bar\rho^{1-2\alpha},
    \end{aligned}
\end{equation}
and
\begin{equation}\label{113}
    \begin{aligned}
        &\int_0^T \bar\rho^{-\frac{3}{2}+(\frac{1}{2}-\alpha)\frac{6}{q}}\|\nabla u\|_{L^2}^{3-\frac{6}{q}}\|\sqrt{\rho} u_t\|_{L^2}^{\frac{6}{q}-1} dt\\
        \leq& \bar\rho^{-\frac{3}{2}+(\frac{1}{2}-\alpha)\frac{6}{q}}
        \left(\int_0^T\|\nabla u\|_{L^2}^2dt\right)^\frac{3q-6}{2q}
        \left(\int_0^T\|\sqrt{\rho} u_t\|_{L^2}^2dt\right)^\frac{6-q}{2q}\\
        \leq& C\bar\rho^{-2\alpha}.
    \end{aligned}
\end{equation}
Hence, collecting all the estimates \eqref{111}-\eqref{113}, one gets
\begin{equation}\label{trut1}
    \sup_{0\le t\le T} t \|\sqrt{\rho} u_t \|_{L^2}^2 +  \bar\rho^\alpha\int_{0}^{T} t\|\nabla u_t\|^2_{L^2} dt
    \leq C(\bar\rho^{-A}+\bar\rho^\alpha)\cdot
    \exp{\{C\bar\rho^{-B}\}}
    \leq C\bar\rho^\alpha.
\end{equation}
with some $A,B>0$.

On the other hand, multiplying \eqref{k3} by $t$, one has
\begin{equation}
    \begin{aligned}
        &\frac{d}{dt}\frac{t^2}{2} \int \rho |u_t|^2 dx 
        +t^2\bar\rho^\alpha \int|\nabla u_t|^2dx\\
        \leq & t\int \rho |u_t|^2 dx
        +Ct^2\bar\rho^{\frac{1}{2}-2\alpha} \|\nabla u\|_{L^2} \|\sqrt{\rho} u_t \|_{L^2}^3
        +Ct^2 \bar\rho^{-\frac{3}{2}+(\frac{1}{2}-\alpha)\frac{6}{q}}\|\nabla u\|_{L^2}^{3-\frac{6}{q}}\|\sqrt{\rho} u_t\|_{L^2}^{1+\frac{6}{q}}\\
        &+Ct^2(\bar\rho^{2-3\alpha}+\bar\rho^{ -\frac{1}{2}+\frac{6}{q}(\frac{1}{2}-\alpha)}) \|\nabla u\|_{L^2}^2 \|\sqrt{\rho} u_t \|_{L^2}^2\\
        &+Ct^2(\rho^{ -\alpha+2(1-\alpha)\frac{6}{q}}+\bar\rho^{ -\frac{1}{2}+\frac{6}{q}(\frac{1}{2}-\alpha)})
        \|\nabla u\|_{L^2}^8.
    \end{aligned}
\end{equation}
Applying Gronwall’s inequality to arrive at
\begin{equation}
    \begin{aligned}
        &\sup_{0\le t\le T} t^2 \|\sqrt{\rho} u_t \|_{L^2}^2 +  \bar\rho^\alpha\int_{0}^{T} t^2\|\nabla u_t\|^2_{L^2} dt\\
        \leq & C\int_0^T \left( t^2(\bar\rho^{ -\alpha+2(1-\alpha)\frac{6}{q}}+\bar\rho^{ -\frac{1}{2}+\frac{6}{q}(\frac{1}{2}-\alpha)})
        \|\nabla u\|_{L^2}^8
        +t\|\sqrt{\rho} u_t\|_{L^2}^2 \right) dt
        \cdot\exp{\{\bar\rho^{-B}\}},
    \end{aligned}
\end{equation}
and \eqref{tdu} together with \eqref{trut} yields
\begin{equation}
    \begin{aligned}
        &\int_0^T t^2(\bar\rho^{ -\alpha+2(1-\alpha)\frac{6}{q}}+\bar\rho^{ -\frac{1}{2}+\frac{6}{q}(\frac{1}{2}-\alpha)})
        \|\nabla u\|_{L^2}^8 dt\\
        \leq& C (\bar\rho^{ -\alpha+2(1-\alpha)\frac{6}{q}}+\bar\rho^{ -\frac{1}{2}+\frac{6}{q}(\frac{1}{2}-\alpha)})
        \sup_{t\in[0,T]}\|\nabla u \|_{L^2}^2
        \cdot\sup_{t\in[0,T]}t^2\|\nabla u \|_{L^2}^4
        \cdot\int_0^T \|\nabla u \|_{L^2}^2dt\\
        \leq& C(\bar\rho^{ -1+2(1-\alpha)\frac{6}{q} +3(1-\alpha)}+\bar\rho^{ -\frac{1}{2}+\frac{6}{q}(\frac{1}{2}-\alpha)+3(1-\alpha)}).
    \end{aligned}
\end{equation}
Hence it follows immediately by \eqref{trut1} that,
\begin{equation}
    \sup_{0\le t\le T} t^2 \|\sqrt{\rho} u_t \|_{L^2}^2 +  \bar\rho^\alpha\int_{0}^{T} t^2\|\nabla u_t\|^2_{L^2} dt
    \leq C(\bar\rho^{-A_1}+\bar\rho)\cdot
    \exp{\{\bar\rho^{-B_1}\}}
    \leq C\bar\rho,
\end{equation}
with some $A_1,B_1>0$.
\end{proof}
Finally, we are about to finish the bound of  $\mathcal{E}_\rho$, the key observation is that $\|\nabla u\|_{L^1_tL^\infty_x}$ is uniformly bounded with respect to time $T$.
\begin{lemma}\label{L_1}
	There exists a positive constant $\Lambda_2$ such that 
	\begin{equation}
		\mathcal{E}_\rho(T)\le  2 \mathcal{E}_\rho(0),
	\end{equation}
	provided $\bar\rho>\Lambda_2=\Lambda_2(\Omega,C_0,\mu,\alpha,\|\nabla\rho_0\|_{L^q}, \| u_0\|_{H^2})$.
\end{lemma}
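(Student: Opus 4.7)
The plan is to reduce the $W^{1,q}$ estimate of $\rho$ to a bound on $\int_0^T\|\nabla u\|_{L^\infty}dt$ via the transport equation, and then exploit the $W^{2,q}$ Stokes estimate together with the time-weighted bounds on $u_t$ to show that this quantity decays like a negative power of $\bar\rho$.

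First, I would apply $\nabla$ to the density equation $\rho_t+u\cdot\nabla\rho=0$, multiply by $|\nabla\rho|^{q-2}\nabla\rho$, integrate over $\Omega$, and use $\dv u=0$ to obtain
\begin{equation*}
\frac{d}{dt}\|\nabla\rho\|_{L^q}\le C\|\nabla u\|_{L^\infty}\|\nabla\rho\|_{L^q}.
\end{equation*}
Gronwall then gives $\|\nabla\rho(t)\|_{L^q}\le \|\nabla\rho_0\|_{L^q}\exp\bigl(C\int_0^T\|\nabla u\|_{L^\infty}dt\bigr)$, so it suffices to prove that $\int_0^T\|\nabla u\|_{L^\infty}dt\le C\bar\rho^{-\delta}$ for some $\delta>0$. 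Since $q>3$, the Sobolev embedding $W^{2,q}\hookrightarrow W^{1,\infty}$ reduces this to bounding $\int_0^T\|u\|_{W^{2,q}}dt$.

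Next I would invoke the $W^{2,q}$ Stokes estimate \eqref{W2q} to get
\begin{equation*}
\|u\|_{W^{2,q}}\le C\bar\rho^{1-\alpha}\|u_t\|_{L^q}+C\bar\rho^{(1-\alpha)\frac{5q-6}{q}}\|\nabla u\|_{L^2}^{\frac{6(q-1)}{q}}.
\end{equation*}
The second term is easy: using $\|\nabla u\|_{L^2}^2\le C$ from $\mathcal{E}_u(T)\le3\mathcal{E}_u(0)$ and $\int_0^T\|\nabla u\|_{L^2}^2dt\le C\bar\rho^{1-\alpha}$ from \eqref{basic-est}, its time integral is controlled by $C\bar\rho^{(1-\alpha)(5q-6)/q+(1-\alpha)}$, a negative power of $\bar\rho$ since $\alpha>1$. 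For the first term, interpolation gives $\|u_t\|_{L^q}\le C\|u_t\|_{L^2}^{(6-q)/(2q)}\|\nabla u_t\|_{L^2}^{(3q-6)/(2q)}$ for $u_t\in H_0^1$.

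The main obstacle is the singularity of $u_t$ at $t=0$: to handle $\int_0^T\|u_t\|_{L^q}dt$ I would split according to the weights provided by \eqref{trut} and \eqref{ttrut}. Writing $\|u_t\|_{L^q}= t^{-\beta}\cdot t^{\beta}\|u_t\|_{L^q}$ with $\beta$ chosen so Hölder applies against $\int t\|\nabla u_t\|_{L^2}^2dt$ and $\int\|\sqrt\rho u_t\|_{L^2}^2dt$ (or their $t^2$-weighted analogues on $[1,T]$), I would obtain $\int_0^T\|u_t\|_{L^q}dt\le C\bar\rho^{\gamma}$ with $\gamma$ satisfying $1-\alpha+\gamma<0$, i.e. a negative power of $\bar\rho$ after multiplying by $\bar\rho^{1-\alpha}$. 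Combining both contributions produces $\int_0^T\|\nabla u\|_{L^\infty}dt\le C\bar\rho^{-\delta}$, and therefore
\begin{equation*}
\mathcal{E}_\rho(T)\le \|\nabla\rho_0\|_{L^q}\exp(C\bar\rho^{-\delta})\le 2\|\nabla\rho_0\|_{L^q}=2\mathcal{E}_\rho(0)
\end{equation*}
provided $\bar\rho\ge\Lambda_2$ is taken large enough. The delicate bookkeeping is making sure that after interpolation every exponent of $\bar\rho$ produced is strictly negative; this is precisely where the hypothesis $\alpha>1$ (together with $q\in(3,6)$) is used.
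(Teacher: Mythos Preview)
Your proposal is correct and follows essentially the same route as the paper: derive the transport inequality for $\|\nabla\rho\|_{L^q}$, reduce via Sobolev embedding and \eqref{W2q} to bounding $\int_0^T\|\rho u_t\|_{L^q}\,dt$ and the lower-order $\|\nabla u\|_{L^2}$ term, interpolate $\|u_t\|_{L^q}$ between $\|\sqrt\rho u_t\|_{L^2}$ and $\|\nabla u_t\|_{L^2}$, then split the time integral at $t=1$ so that the $t$-weighted bounds \eqref{trut} absorb the $t\to0$ singularity while the $t^2$-weighted bounds \eqref{ttrut} ensure integrability on $[1,T]$, and conclude by Gronwall. The paper carries out the explicit exponent bookkeeping (arriving at $\int_0^T\|\nabla u\|_{L^\infty}\,dt\le C\bar\rho^{(5q-6)(1-\alpha)/(4q)}$ from the $u_t$ contribution), but the strategy is identical to yours.
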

\begin{proof}
	It follows from \eqref{ins}$_1$  that  
	\begin{equation}
		\nabla \rho_t + u \cdot \nabla^2 \rho + \nabla u \cdot \nabla \rho =0.
	\end{equation}
	Multiplying the above equation by $|\nabla \rho|^{q-2}\nabla \rho$ and then integrating by parts, we have 
	\begin{equation}\label{dr}
		\begin{aligned}
			\frac{1}{p}\frac{d}{dt} \|\nabla \rho\|_{L^q}^q  = & - \int \nabla \rho \cdot \nabla u \cdot \nabla \rho |\nabla \rho|^{q-2} dx  \\
			\le & C \|\nabla u\|_{L^\infty} \|\nabla \rho \|_{L^q}^q .
		\end{aligned}
	\end{equation}
It follows from \eqref{W2q} and Gagliardo-Nirenber inequality that

	\begin{equation}\label{kkk}
		\begin{aligned}
			\int_0^T \|\nabla u\|_{L^\infty} dt
   &\le \int_0^T \|\nabla u\|_{W^{1,q}} dt\\
   &\le C \bar\rho^{-\alpha} \int_0^T  \|\rho u_t\|_{L^q}   dt + \bar \rho^{(1-\alpha)\frac{5q-6}{q}} \int_0^T  \|\nabla u\|_{L^2}^{\frac{6(q-1)}{q}} dt.
		\end{aligned}
	\end{equation}
For the first term on right-hand side of the above inequality, after using Gagliardo-Nirenber inequality and \eqref{trut} and \eqref{ttrut}, we have 
    \begin{equation}
        \begin{aligned}
            &\int_{0}^{T}  \|\rho u_t\|_{L^q} dt \\
            \le & \int_{0}^{T}  \bar\rho^{\frac{5q-6}{4q}}\|\sqrt{\rho}u_t\|_{L^2}^{\frac{6-q}{2q}} \|\nabla u_t\|_{L^2}^{\frac{3(q-2)}{2q}} dt\\
            \le & C \bar\rho^{\frac{5q-6}{4q}} \left(\sup_{0\le t\le \min\{1,T\}} t \|\sqrt{\rho}u_t\|_{L^2}^2 dt\right)^{\frac{6-q}{4q}}\\
            &\cdot \left( \int_{0}^{\min\{1,T\}} t \|\nabla u_t\|_{L^2}^2 dt \right)^{\frac{3(q-2)}{4q}}\left( \int_{0}^{\min\{1,T\}} t^{-\frac{2q}{q+6}}  dt \right)^{\frac{q+6}{4q}} \\
            &+ C \bar\rho^{\frac{5q-6}{4q}} \left(\sup_{\min\{1,T\}\le t\le T} t^2 \|\sqrt{\rho}u_t\|_{L^2}^2 dt\right)^{\frac{6-q}{4q}} \\
            &\cdot\left( \int_{\min\{1,T\}}^{T} t^2 \|\nabla u_t\|_{L^2}^2 dt \right)^{\frac{3(q-2)}{4q}} \left( \int_{\min\{1,T\}}^{T} t^{-\frac{4q}{q+6}}  dt \right)^{\frac{q+6}{4q}} \\
            \le & C \bar\rho^{\frac{5q-6}{4q} + \alpha \frac{6-q}{4q} } + C \bar\rho^{\frac{5q-6}{4q} + \frac{6-q}{4q} + (1-\alpha) \frac{3(q-2)}{4q}} \\
            \le & C \bar\rho^{\frac{5q-6}{4q} + \alpha \frac{6-q}{4q} },
        \end{aligned}
    \end{equation}
which together with \eqref{kkk}, yields that after using \eqref{basic-est},
    \begin{equation}
		\begin{aligned}
			\int_0^T \|\nabla u\|_{L^\infty} dt \le & C \bar\rho^{-\alpha} \int_0^T  \|\rho u_t\|_{L^q}   dt + \bar \rho^{(1-\alpha)\frac{5q-6}{q}} \int_0^T  \|\nabla u\|_{L^2}^{\frac{6(q-1)}{q}} dt\\
            \le & C \bar\rho^{\frac{5q-6}{4q} + \alpha \frac{6-q}{4q} -\alpha } +  C \bar \rho^{(1-\alpha)\frac{5q-6}{q} + (1-\alpha)}\\
            \le & C \bar\rho^{\frac{5q-6}{4q}(1 -\alpha) } +  C \bar \rho^{(1-\alpha)\frac{5q-6}{q} + (1-\alpha)}\\
            \leq & C \bar\rho^{-D},
		\end{aligned}
	\end{equation}
 where
 \begin{equation*}
     D=\max{\left\{\frac{5q-6}{4q}(\alpha-1), \frac{6(q-1)}{q}(\alpha-1)\right\}}.
 \end{equation*}
    Finally, note
    \begin{equation}
        \mathcal{E}_\rho(0)= \|\nabla \rho_0\|_{L^q},
    \end{equation}
    then Gronwall’s inequality together with \eqref{dr} yields
    \begin{equation}
        \sup_{0\le t\le T} \|\nabla \rho\|_{L^q}  
        \le \exp{\left\{C_2\bar\rho^{-D}\right\}}\|\nabla \rho_0\|_{L^q}\leq 2\mathcal{E}_\rho(0),
    \end{equation}
    provided 
    \begin{equation}
        \bar{\rho}\geq \biggl(\frac{C_2}{\log 2}\biggr)^\frac{1}{D}\triangleq \Lambda_2.
    \end{equation}
\end{proof}
\textbf{Proof of Prosition \ref{pr}}
    Proposition \ref{pr} is a direct consequence of Lemma \ref{L_2} and \ref{L_1}.

% The following Lemmas 12, 13, 14, and 15 deal with the higher-order estimates of the solutions needed to guarantee the extension of a local classical solution to be a global one. The proofs are similar to the ones in [18,21], and are sketched here for completeness.

\section{Proof of Theorem \ref{global}}
According to Theorem \ref{local}, there exists a $\Tilde{T}>0$ such that the density-dependent Navier–Stokes system \eqref{ins}-\eqref{bc} has a unique local strong solution $(\rho, u, P)$ on $[0, \Tilde{T}]$. We use the a priori estimates, Proposition \ref{pr} to extend the local strong solution to all time.

Due to 
\begin{equation}
    \|\nabla \rho_0\|_{L^q}=\mathcal{E}_\rho(0)<3\mathcal{E}_\rho(0), \ 
    \|\nabla u_0\|_{L^2}^2=M<3M,
\end{equation}
and the local regularity results \eqref{l-r}, there exists a $T_1\in(0, \Tilde{T})$ such that
\begin{equation}
    \sup_{0\le t\le T_1} \|\nabla \rho\|_{L^q}\leq 3\mathcal{E}_\rho(0),\ 
    \sup_{0\le t\le T_1} \|\nabla u_0\|_{L^2}^2 \leq 3M.
\end{equation}
Set
\begin{equation}
    T^\ast=\sup\{T| (\rho, u, P)\ \mathrm{is}\ \mathrm{a}\ \mathrm{strong}\ \mathrm{solution}\ \mathrm{to}\ \eqref{ins}-\eqref{bc}\ \mathrm{on}\ [0,T]\},
\end{equation}
\begin{equation}
T_1^\ast=\sup\left\{T\bigg| 
\begin{array}{l}
(\rho, u, P)\ \mathrm{is}\ \mathrm{a}\ \mathrm{strong}\ \mathrm{solution}\ \mathrm{to}\ \eqref{ins}-\eqref{bc}\ \mathrm{on}\ [0,T],\\
\sup_{0\le t\le T_1} \|\nabla \rho\|_{L^q}\leq 3\mathcal{E}_\rho(0),\ 
\sup_{0\le t\le T_1} \|\nabla u_0\|_{L^2}^2 \leq 3M.
\end{array} \right\}.
\end{equation}
Then $T^\ast_1\geq T_1>0$. Recalling Proposition \ref{pr}, it’s easy to verify
\begin{equation}
    T^\ast=T^\ast_1.
\end{equation}
provided that $\bar\rho>\Lambda_0$ as assumed. 

We claim that $T^\ast=\infty$. Otherwise, assume that
$T^\ast<\infty$. By virtue of Proposition \ref{pr}, for every $t\in[0, T^\ast)$, it holds that
\begin{equation}
    \sup_{0\le t\le T_1} \|\nabla \rho\|_{L^q}\leq 2\mathcal{E}_\rho(0),\ 
    \sup_{0\le t\le T_1} \|\nabla u_0\|_{L^2}^2 \leq 2M,
\end{equation}
which contradicts the blowup criterion \eqref{blow-up}. Hence we finish the proof of Theorem \ref{global}.

\section*{Conflict-of-interest statement}
All authors declare that they have no conflicts of interest.

\section*{Acknowledgments}
X.-D. Huang is partially supported by NNSFC Grant Nos. 11971464, 11688101 and CAS
Project for Young Scientists in Basic Research, Grant No. YSBR-031, National Key R$\&$D Program of China, Grant No. 2021YFA1000800.
J.-X. Li was supported in part
by Zheng Ge Ru Foundation, Hong Kong RGC Earmarked Research Grants CUHK-14301421, CUHK-14300819,
CUHK-14302819, CUHK-14300917, the key project of NSFC (Grant No. 12131010)  the Shun Hing Education and Charity Fund. Part of this work was done when R. Zhang was visiting the Institute of Mathematical Sciences at the Chinese University of Hong Kong. They would like to thank the institute for its hospitality.

\end{document}